\newenvironment{proof}{\noindent {\bf Proof }}
{\hfill $\bullet$ \vspace{0.25cm}}
\def\one{{\bf 1}\hskip-.5mm}
\def\R{{\mathbb R}}
\def\Z{{\mathbb Z}}
\def\ZZ{{\mathbb Z}}
\def\N{{\mathbb N}}
\def\LL{{\mathcal G}}
\def\SS{{ S}}
 \def\F {{\mathcal F}}
\def\AA{{\mathcal A}}
\newtheorem{theo}{Theorem}
\newtheorem{prop}{\indent Proposition}
\newtheorem{rem}{\indent Remark}
\newtheorem{lem}{\indent Lemma}
\newtheorem{defin}{\indent Definition}
\newtheorem{cor}{\indent Corollary}
\title{ Perfect simulation of  infinite
  range Gibbs measures and coupling with  their finite range 
   approximations }
\author{A.~Galves \and E.~L\"ocherbach \and E.~Orlandi}
\date{September 28, 2009}
\begin{document}

\maketitle
\begin{abstract}

In this paper we address  the   questions    of perfectly sampling   a
Gibbs measure with infinite range interactions  and  of  perfectly sampling the measure  together with its finite
range approximations. We solve these questions by introducing a perfect
simulation algorithm for the  measure and for the  coupled measures.   The algorithm works for general Gibbsian interaction under   requirements on  the tails of the interaction.   As a consequence we
obtain 
an upper bound for the error we make when sampling from a
finite range approximation instead of the true infinite range measure.
 \end{abstract}

{\it Key words} :  Coupling, perfect simulation, $\bar{d}$-distance, Gibbs measure, 
infinite range interactions, finite range approximations, interacting particle systems.\\

{\it MSC 2000}  :  82B20, 60K35, 60G60, 62M40.

\section{Introduction}
Consider a Gibbs measure with an infinite range potential and
its finite range approximation obtained by truncating the 
interaction at a certain range.  Can we construct a perfect sampling of the Gibbs measure with an infinite range potential?
Further, if we make a local inspection of a
perfect sampling of the finite range approximation, how often does it
coincide with a sample from the original infinite range measure?  We
address these questions by introducing a new   perfect simulation
algorithm for the original  Gibbs measure   and a 
coupled  perfect simulation
algorithm for the original measure together with its finite range approximation.

By a  perfect simulation algorithm we mean a function mapping a
family of independent uniform random variables on the set of 
configurations having as  law the   infinite range Gibbs 
measure.  
By a coupled perfect simulation algorithm we mean a function mapping a
family of independent uniform random variables on the set of pairs of
configurations having as first marginal the original infinite range
measure and as second marginal the finite range approximation. These
functions  are defined through two coupled Glauber dynamics having as reversible
measures the original Gibbs measure and its finite range approximation,
respectively. The perfect simulation algorithm produces coupled samples
of the two measures. 
      
The main difficulty in this constructive approach is that we have to
deal with infinite range spin flip rates. This difficulty is overcome
by a decomposition of the spin flip rates of the Glauber dynamics
as a convex sum of local
spin flip rates. 

Our results are obtained under high temperature conditions and
assuming that the interaction decays fast enough.   We do not need any monotonicity  property of the infinite range Gibbs measure.
  Under  these
conditions, we show that the algorithm that we propose stops almost
surely, after a finite number of steps.  Moreover,  we give an upper bound, uniform in space, for the
probability of local discrepancy between the coupled realizations of
the two measures. This is the content of Theorem \ref{theo:perfect}.
As a corollary, Theorem \ref{theo:coupling} shows that the same upper
bound holds for Ornstein's $\bar d$-distance between the two measures.    

An upper bound for the $ \bar d $-distance   slightly worse  than the one obtained in 
Theorem \ref{theo:coupling} can be
obtained under less restrictive assumptions, by  applying  Dobrushin's
contraction method.  We refer the reader to the series of papers of
Dobrushin (1968-1970) as well as to Presutti (2009) for a recent nice
self-contained presentation of the subject. However, the contraction
method is not constructive and does not provide an explicit sampling
procedure for the measures.  
 
The approach followed in the present paper was suggested by a recent work of Galves et al. (2008). There, the flip rates of an infinite range interacting multicolor system were decomposed as convex combinations of local range flip rates. With respect to this work, the novelty of what follows is that  the decomposition is explicit and more suitable in view of the intended coupling result. The idea of decomposing flip rates can be traced back to Ferrari (1990)  in which something with the same flavor was done for a finite range interacting particle system and to Ferrari et al. (2000) in which the probability transition of a stochastic chain of infinite order was decomposed as a convex combination of finite order probability transitions.  We refer to Galves et
al. (2008) for more details and related literature.

This paper is organized as follows. In Section 2, we present the basic
notations and the main result, Theorem \ref{theo:perfect}.  In Section
3, we introduce the Glauber dynamics and the associated processes.
The representation of the spin flip rates as convex combination of
local rates is given in Theorem \ref{theo:dec}.  In  Section
4.1  we introduce the backward sketch process  which is the basis of the 
perfect simulation algorithm 
and collect  the main properties of this process
in Proposition \ref{nstop}. Section 4.2 presents  
the coupled perfect simulation algorithm for the couple of
measures.  In Section 5 we prove Theorem \ref{theo:perfect}.  We
conclude the paper in Section 6  discussing how certain features  of  high temperature Gibbs systems  can be obtained applying our construction.    In particular we  give results on  
Ornstein's $\bar d$-distance  between  the infinite range and the finite range Gibbs measure,  and on the decay of correlations.   
These results are obtained under  requirements stronger than  the one  needed using the   Dobrushin contraction method,  since   they are based on our     explicit coupled perfect
simulation procedure.

\vskip0.5cm \noindent {\bf Acknowledgments.}  The authors thank
L. Bertini, P. Collet, P. Ferrari, R. Fern\'andez, D. Gabrielli  and E. Presutti for helpful
discussions. We thank the Institut Henri Poincar\'e - Centre
Emile Borel (workshop 
     M\'ecanique Statistique, Probabilit\'es et Syst\`emes de Particules 2008)
 for hospitality and support.
 
The authors have been partially supported by Prin07: 20078XYHYS (E.O.),
by CNPq 305447/2008-4 (A.G.),  ANR-08-BLAN-0220-01 (E.L.) and
Galileo 2008-09 (E.L. and  E.O.).  

This paper is part of PRONEX/FAPESP's project \emph{Stochastic
behavior, critical phenomena and rhythmic pattern identification in
natural languages} (grant number 03/09930-9) and CNRS-FAPESP project
\emph{Probabilistic phonology of rhythm}. 
Ce travail a b\'en\'efici\'e d'une aide de l'Agence Nationale de la Recherche
portant la r\'ef\'erence ANR-08-BLAN-0220-01. 
 
\section{Definitions and main results}
Let $A= \{-1,1\}$ and $S=A^{\Z^d} $ be the set of spin
configurations.  We endow $S$ with the product sigma algebra.  We
define on $\Z^d$ the $L^1$ norm, $\| i \|= \sum_{k=1}^d |i_k|$. 

Configurations will be denoted by Greek letters $\sigma, \eta, \dots
$. A point $ i \in \ZZ^d $ will be called a site.  For any $i \in
\ZZ^d$, notations $\sigma(i)$ and $\sigma_i$ will be used
in order to denote the value of the configuration $\sigma$ at
site $i$. By extension, for any subset $V \subset \ZZ^d$,
$\sigma(V) $ and $\sigma_V \in A^V$ denote the restriction of the configuration
$\sigma$ to the set of positions in $V.$ If $V \subset \ZZ^d$ is a finite
subset, we denote $V \Subset \ZZ^d .$ In this case, we write $| V| $ for the 
cardinal of $V.$ Finally, for any $ i \in \ZZ^d $ and $\sigma \in S,$ we shall
denote $\sigma^i $ the modified configuration
$$ \sigma^i (j) = \sigma (j) , \mbox{ for all } j \neq i, \mbox{ and } \sigma^i (i) = - \sigma (i) .$$ 

\begin{defin}
  An interaction is a collection $ \{J_B  , B  \Subset \ZZ^d \} $ of real  numbers 
  indexed by finite subsets $ B \Subset \ZZ^d $, $ |B| \ge 2$,  which satisfies 
 \begin{equation}\label{B3}
 \sup_{i \in \Z^d}  \sum_{ B : i \in B } |J_B|  < \infty .
\end{equation} 
For finite $B\subset \Z^d$ and $ \sigma \in S$ let
$$ \chi_B(\sigma) = \prod_{i\in B} \sigma (i). $$
\end{defin}

\begin{defin}
A probability measure $\mu$ on $S$ is said to be a Gibbs measure
  relative to the interaction $\{J_B, B \Subset  \ZZ^d \}$ if for all $i \in \ZZ^d$ and
  for any fixed $\zeta \in S,$ a version of the conditional
  probability $ \mu (\{ \sigma : \sigma (i)= \zeta (i) | \sigma (j) =
  \zeta (j) \mbox { for all } \ j \neq i \})$ is given by
\begin{equation}
  \label{B4}  
\mu (\{ \sigma : \sigma (i)= \zeta (i) | \sigma (j) = \zeta (j) \mbox
{ for all } \ j \neq i \} ) = \frac{1}{1 + \exp ( - 2 \sum_{ B: i \in B } J_B \chi_B(\zeta)  ) } . 
\end{equation}
\end{defin}
To  define  the Gibbs measure $\mu^{[L]} $ relative to the 
interaction truncated at range $L $ we set 
\begin{equation}
  \label{RR1}   B_i (L) = \{ j \in \ZZ^d :   \| j - i \| \le L \} .\end{equation}
 Then     $\mu^{[L]}$ is the probability measure
on $S$ such that for all $i \in \ZZ^d$ and for any fixed
$\zeta \in S,$ a version of the conditional probability $ \mu^{[L]}
(\{ \sigma : \sigma (i)= \zeta (i) | \sigma (j) = \zeta (j) \mbox {
for all } \ j \neq i \})$ is given by
\begin{multline*}
\mu^{[L]} (\{ \sigma : \sigma (i)= \zeta (i) | \sigma (j) = \zeta (j)
\mbox { for all } \ j \neq i \} ) \\
= \frac{1}{1 + \exp ( -2 \sum_{ B : i \in B, B \subset B_i (L) } J_B \chi_B(\zeta) )} .  
\end{multline*}
We consider the interaction $J_B^\beta  = \beta \,J_B  $ where $
\beta$ is a positive parameter.
The associated Gibbs measures will be denoted by $\mu$ and $\mu^{[L]} $,
 omitting to write the explicit dependence on $\beta$.  We denote by $
 (X(i), i \in \ZZ ^d ), $ $ (X^{[ L]}(i), i \in \ZZ ^d ) $ two random
 configurations on $S$ distributed according to $\mu $ and
 $\mu^{[L]},$ respectively.

All processes that we consider in this paper will be constructed as
functions of an independent family of uniform random variables 
$(U_n (i), n \in \N , i \in \Z^d ) $. Denote $(\Omega, \AA, P)$ 
the probability space on which is
defined the independent family of uniform random variables.  

\begin{defin}
A perfect sampling algorithm of the measure $\mu $ 
is a map $F $ from $ [0, 1 ]^{ \N \times \Z^d
  } $ to $ \{ -1 , + 1 \}^{\Z^d },$ such that
$$ F (U_n (j), n\in \N, j \in \Z^d) \mbox{ has distribution  } \mu .$$
A coupled perfect sampling algorithm for the pair $((X(i), X^{[L]}
  (i)), i \in \ZZ^d ) $ is a map $G $ from $ [0, 1 ]^{ \N \times \Z^d
  } $ to $ \{ -1 , + 1 \}^{\Z^d \times \Z^d},$ such that
$$ G (U_n (j), n\in \N, j \in \Z^d) \mbox{ has marginals  } (X(i), i
  \in \ZZ^d ) \mbox{ and } (  X^{[L]} (i), i \in \ZZ^d )   .$$
\end{defin}
Since $F$ produces     an unbiased sample from the target distribution $\mu$, the algorithm is denoted  ``perfect''. 
\begin{defin} \label {S.2} 
The  perfect sampling algorithm  
  $F $ of  the measure $\mu $  is said to stop almost surely after a finite number of steps if 
 for every site  $i \in \Z^d$, there exists a   finite subset
  $  F^{(i)}  $ of $\Z^d$ and a finite random variable  $ N^{(i)}_{STOP} \ge 1$ such that if
\begin{equation}\label{RR2}
 U'_n   (j) = U_{n} (j) \mbox{ for all } j \in  F^{(i)}, 1 \le n \le
N^{(i)}_{STOP} ,
\end{equation}
then
\begin{equation}\label{RR3}
 F (U_n (j), n\in \N, j \in \Z^d) (i) =  F (U'_n (j), n \in \N, j \in \Z^d) (i).
\end{equation}
\end{defin}
Note that (\ref{RR3})   implies that $F (U_n (j), n\in \N, j \in \Z^d) (i) $  depends only on    $U_n (j)$ for $ 1 \le n \le
N^{(i)}_{STOP}$. 
The following theorem is our main result.
\begin{theo}\label{theo:perfect}
Assume that
\begin{equation}
  \label{cn1}  
\sup_{i \in \Z^d} \sum_k | B_i (k) | \left(  \sum_{ B : i \in B , B \subset B_i (k) , B \not \subset   B_i (k- 1) } | J_B|  \right)< \infty .
\end{equation} 
\begin{enumerate}
\item  There exists $\beta_c > 0$ such that
   for any $\beta < \beta_c$
   there is a perfect sampling algorithm of $\mu.$ 
   This algorithm stops after a finite number of steps almost surely and 
 \begin{equation}
  \label{s1}    \sup_{i \in \Z^d} E( N^{(i)}_{STOP}) \le \frac 1 \gamma,
\end{equation} 
where $\gamma $ is given  in \eqref{eq:epsilon}. 
\item  Moreover, for any $\beta < \beta_c,$ there exists
   a
   coupled perfect sampling algorithm for the pair
   $ ((X(i), X^{[L]} (i)), i \in \ZZ^d ) $ satisfying  
\begin{equation}
  \label{cn2}   
\sup_{i \in \Z^d } P( X(i) \neq X^{[L]}(i) ) \le \frac 1 \gamma \sup_{i \in \Z^d }
\left( 1- e^{-\beta \sum_{ B : i \in B , B \not\subset B_i (L) } | J_B|  } \right).
\end{equation} 
 \end {enumerate}
\end{theo}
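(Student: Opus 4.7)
My plan is to build a continuous-time Glauber dynamics whose unique reversible measure is $\mu$, to decompose each spin-flip rate $c(i,\sigma)=(1+\exp(-2\beta\sum_{B\ni i}J_B \chi_B(\sigma)))^{-1}$ as a convex combination $\sum_{k\ge 0}\lambda_k(i)\,c^{(k)}(i,\sigma)$, where $c^{(k)}(i,\cdot)$ depends only on $\sigma_{B_i(k)}$ and $c^{(0)}$ is constant (this is the content of Theorem \ref{theo:dec}), and then to exploit this decomposition in the spirit of Coupling-From-The-Past. At each ring of the rate-$1$ Poisson clock attached to site $i$, one uses the coordinate $U_n(i)$ first to draw a ``range'' $K\sim(\lambda_k(i))_k$; if $K=0$, the new spin at $i$ depends only on $U_n(i)$ and the backward lineage terminates, while if $K\ge 1$ the exploration branches into all sites of $B_i(K)$ at the time of the previous clock ring.

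For part (1), I would treat the backward sketch rooted at $i$ (Proposition \ref{nstop}) as a subcritical branching random walk whose mean offspring is $\sum_{k\ge 1}\lambda_k(i)\,|B_i(k)|$. The explicit telescoping form in which $\lambda_k(i)$ is essentially proportional to $\sum_{B\ni i,\,B\subset B_i(k),\,B\not\subset B_i(k-1)}|J_B|$ makes this mean finite and of order $\beta$ under (\ref{cn1}); one then chooses $\beta_c$ so that for $\beta<\beta_c$ the mean equals $1-\gamma<1$, with $\gamma$ as in \eqref{eq:epsilon}. Subcriticality yields a.s.\ finiteness of the sketch, and an elementary branching-process computation gives $E(N^{(i)}_{STOP})\le 1/\gamma$. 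Forward-evaluating the sketch from the uniforms along the (finite) explored subgraph defines the map $F$ and certifies the locality property (\ref{RR2})--(\ref{RR3}).

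For part (2), I would run the analogous construction for $\mu^{[L]}$ with the truncated flip rate $c^{[L]}(i,\sigma)=\sum_{k\le L-1}\lambda_k(i)\,c^{(k)}(i,\sigma)$ plus a constant contribution absorbing the missing mass, driven by the \emph{same} family of uniforms $U_n(j)$. The two decompositions coincide on the event that every range drawn along the backward tree rooted at $i$ is strictly less than $L$, so on that event the two forward sweeps produce the same spin at $i$. The probability that a given clock ring at site $j$ selects $K\ge L$ is at most $1-\exp(-\beta\sum_{B\ni j,\,B\not\subset B_j(L)}|J_B|)$, which follows directly from Theorem \ref{theo:dec}. A union bound over the backward tree --- whose expected size is at most $1/\gamma$ by part (1) --- then delivers (\ref{cn2}).

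The main technical obstacle is the joint quantitative control of the coefficients $\lambda_k(i)$ coming from Theorem \ref{theo:dec}: one needs the telescoping identity to produce \emph{simultaneously} (a) a mean-offspring bound in terms of $|B_i(k)|\cdot\sum_{B\ni i,\,B\subset B_i(k),\,B\not\subset B_i(k-1)}|J_B|$, which yields subcriticality under (\ref{cn1}) and the uniform estimate (\ref{s1}); and (b) a per-ring disagreement bound of the form $1-\exp(-\beta\sum_{B\ni j,\,B\not\subset B_j(L)}|J_B|)$, which is what converts the union bound over the backward tree into the clean estimate (\ref{cn2}). Once these two quantitative consequences of the decomposition are in hand, both parts of the theorem follow from essentially the same argument.
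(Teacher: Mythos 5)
Your proposal follows essentially the same route as the paper: the backward sketch dominated by a subcritical offspring/random-walk comparison with mean $\sum_k\lambda_i(k)|B_i(k)|\le 1-\gamma$ (the paper phrases this as an optional-stopping bound for the supermartingale $S^{(i)}_n+n\gamma$, giving $E(N^{(i)}_{STOP})\le 1/\gamma$), and the coupling bound obtained by multiplying the per-ring probability $\sum_{k>L}\lambda_j(k)=1-e^{-\beta\sum_{B\ni j,\,B\not\subset B_j(L)}|J_B|}$ of drawing a range beyond $L$ by the expected number of rings, exactly as in the paper's Lemma \ref{lemma2}. The only slips are cosmetic: the rate actually decomposed is $c_i(\sigma)=e^{-\beta\sum_{B\ni i}J_B\chi_B(\sigma)}$ (with clocks of rate $M_i$, not $1$) rather than the heat-bath expression you wrote, and the disagreement event is a drawn range $k>L$ rather than $k\ge L$; neither affects the structure of the argument.
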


\begin{rem}
The constant $\beta_c$ is the solution 
of  equation \eqref {betac1} in Section \ref{sectionafter}. \end{rem}

\begin{rem}
In case of an interaction defined through a pairwise potential
$\{ J(i,j),   (i,j) \in \ZZ^d \times \ZZ^d\} $ with $ J(i,i)=0$ for $i \in \ZZ^d$, 
condition \eqref{cn1} of
Theorem \ref{theo:perfect} reads as 
$$ \sup_{i \in \Z^d} \sum_k | B_i (k) | \left(  \sum_{ j : \| i - j\| = k  } | J(i,j)|   \right)< \infty .$$
In this case,
$$ \sup_{i \in \Z^d } P( X(i) \neq X^{[L]}(i) ) \le \frac 1 \gamma \sup_{i \in \Z^d }
\left( 1- e^{-\beta \sum_{ j : \| i - j \| > L}  | J(i,j)|  } \right) .$$
\end{rem}

The above   perfect sampling algorithms  are  based on a coupled
construction of two processes having $\mu $ and $\mu^{[L]} $ as
reversible measures.  These two processes will be introduced in the
next section. The definition of the algorithms can only be given in
Section \ref{algo}.  The
proof of Theorem
\ref{theo:perfect} is given in Section \ref{sectionafter}.

\section{Glauber dynamics}
We now introduce a Glauber dynamics having $\mu $ as reversible
measure.  This is an interacting particle system $(\sigma_t (i), i \in
\Z^d, t \in \R )$ taking values in $S.$ Sometimes we will also use the
short notation $(\sigma_t)_t$ for the interacting particle system.  To
describe the process, we need some extra notation.

For any $i \in \ZZ^d,$  $\sigma \in \{-1,1\}^{\ZZ^d},$  and $\beta>0, $ we
define $ c_i (\sigma),$ which is the rate at which the spin $i$ flips when the
  system is in the configuration $\sigma ,$ as 
 \begin{equation}
  \label{A2}c_i  (\sigma) =  \exp \left( -\beta   \sum_{ B : i \in B} J_B  \chi_B (\sigma)  \right) .
  \end{equation} 
 The generator $\LL$ of the
  process $(\sigma_t)_t$ is defined on cylinder functions $f : S \to
  \R $ as follows
  \begin{equation}
  \label{eq:generator}
  \LL  \,f(\sigma) \,=\, \sum_{i \in \ZZ^d}    c_i (\sigma) [f(\sigma^{i}) - f(\sigma)]  \, .
\end{equation}  
Observe that the condition \eqref{cn1} of Theorem \ref{theo:perfect} implies that
$$ \sup_i \sum_{ B : i \in B} |B| \, | J_B| < \infty,$$
where $|B| $ designs the cardinal of the set $B.$ Hence the rates $c_i
(\sigma)$ are uniformly bounded in $i$ and $\sigma ,$ and
 \begin{equation}
   \label{A3} 
   \sup_{i \in \ZZ^d} \sum_{j \in \Z^d} \sup_{\sigma } | c_i  (\sigma)- c_i  (\sigma^j)| < \infty. 
\end{equation} 
Therefore, Theorem 3.9 of Chapter 1 of Liggett (1985) implies that
$\LL$ is the generator of a Markov process $(\sigma_t )_t$ on $\SS .$
By construction, the process $(\sigma_t)_t$ is reversible
with respect to the Gibbs measure $\mu $ corresponding to the interaction
$ J_B^\beta = \beta \, J_B  .$

For any $ \ell \ge 1$, we now define a Glauber dynamics having
$\mu^{[\ell]}$ as reversible measure. More precisely, we consider the
Markov process $(\sigma^{[\ell]}_t)_t $ on $\SS $ having generator
\begin{equation}
  \label{A8}
    \LL^{[\ell]} \,f(\sigma) \,=\, \sum_{i \in \ZZ^d}    c_i^{[\ell]} (\sigma) [f(\sigma^{i}) - f(\sigma)], 
  \end{equation}
where the rates $ c_i^{[\ell]} $ are given by
  \begin{equation}
  \label{A9}   c_i^{[\ell]} (\sigma) =  r_i^{[\ell]} \exp \left( - \beta  \sum_{ B : i \in B , B \subset B_i (\ell ) }
  J_B \chi_B (\sigma)  \right) 
\end{equation}
and
$$ r_i^{[\ell]} = e^{ - \beta \sum_{ B : i \in B , B \not \subset B_i( \ell)  }| J_B| } .$$ Let us stress the fact that compared to the usual
definition of a Glauber dynamics, an extra factor $ r_i^{[\ell]} $
appears in the definition of the rates $ c_i^{[\ell]} .$ This extra
factor is important in view of the intended coupling of both processes
$\sigma_t$ and $\sigma_t^{[\ell]} $ (see Theorem \ref{theo:dec} below)
and does not change the equilibrium behavior of the process as is
shown in the next proposition.
 
\begin{prop}
  The process $(\sigma^{[\ell]}_t)_t $ is reversible with respect to
  the Gibbs measure $\mu^{[\ell]}$ relative to the pairwise interaction
  $J_\beta $ truncated at range $\ell .$
\end{prop}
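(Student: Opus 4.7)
The plan is to verify the local detailed balance condition at every site: for every $i \in \ZZ^d$ and every $\sigma \in S$,
\[
\mu^{[\ell]}\bigl(\sigma(i) \mid \sigma(j),\, j \ne i\bigr)\, c_i^{[\ell]}(\sigma) \;=\; \mu^{[\ell]}\bigl(-\sigma(i) \mid \sigma(j),\, j \ne i\bigr)\, c_i^{[\ell]}(\sigma^i),
\]
where the two conditional probabilities are taken with respect to the same boundary configuration outside $\{i\}$. Once this identity holds site by site, a standard argument integrates it against the marginal of $\mu^{[\ell]}$ on $\ZZ^d \setminus \{i\}$ to yield $\int f\,\LL^{[\ell]} g\,d\mu^{[\ell]} = \int g\,\LL^{[\ell]} f\,d\mu^{[\ell]}$ on local functions $f,g$, and then reversibility on the full domain follows by density, using that the analogue of \eqref{A3} is satisfied by $c_i^{[\ell]}$ (which is itself dominated by $c_i$, so the semigroup is well-defined by the same Liggett-type construction used for $(\sigma_t)_t$).

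The verification itself is a short computation. First, the prefactor $r_i^{[\ell]}$ appearing in \eqref{A9} does not depend on $\sigma$, so it cancels on both sides. Using $\chi_B(\sigma^i) = -\chi_B(\sigma)$ for every $B \ni i$, the remaining ratio of rates is
\[
\frac{c_i^{[\ell]}(\sigma)}{c_i^{[\ell]}(\sigma^i)} \;=\; \exp\!\Bigl(-2\beta \sum_{B:\, i\in B,\, B\subset B_i(\ell)} J_B\,\chi_B(\sigma)\Bigr).
\]
A one-line manipulation of the logistic expression defining $\mu^{[\ell]}$ (with $J_B$ replaced by $\beta J_B$ per the convention preceding the statement) shows that the ratio of the two conditional probabilities equals precisely the reciprocal of the expression above; their product is one, which is detailed balance.

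There is no real obstacle here: the whole point of the proposition is that the additional factor $r_i^{[\ell]}$, introduced with a view to the coupling of Theorem~\ref{theo:dec}, is symmetric in $\sigma$ and $\sigma^i$ and therefore invisible to the reversibility question. Consequently the truncated rates still balance the DLR conditional distributions of $\mu^{[\ell]}$, and $(\sigma_t^{[\ell]})_t$ is reversible with respect to $\mu^{[\ell]}$.
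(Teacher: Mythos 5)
Your proof is correct and follows essentially the same route as the paper: you verify that $c_i^{[\ell]}(\sigma)/c_i^{[\ell]}(\sigma^i)=\mu^{[\ell]}(\sigma^i)/\mu^{[\ell]}(\sigma)$ (the constant prefactor $r_i^{[\ell]}$ cancelling), which is exactly the detailed balance identity the paper checks before invoking Proposition 2.7 of Chapter IV of Liggett (1985). You merely spell out the cancellation and the logistic-ratio computation that the paper leaves implicit.
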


\begin{proof}
By construction, we have that
$$ \frac{ c_i^{[\ell]} (\sigma)}{ c_i^{[\ell]} (\sigma^i)} 
= \frac{\mu^{[\ell]} (\sigma^ i)}{\mu^{[\ell]} (\sigma)}.$$

Thus, by Proposition 2.7 of Chapter IV of Liggett (1985), it follows
that $(\sigma^{[\ell]}_t)_t $ is reversible with respect to
$\mu^{[\ell]}. $
\end{proof}

The main tool to prove Theorem \ref{theo:perfect} is a coupled
 construction of the processes $(\sigma_t (i), i \in \Z^d, t \in \R )$
 and $(\sigma^{[\ell]}_t (i), i \in \Z^d, t \in \R )$ for $\ell=L,$
 using the basic family of uniform random variables $(U_n (i), n \in
 \N, i \in \Z^d ) .$ This coupling is based on a decomposition of the
 spin flip rates $c_i (\sigma) $ as a convex combination of local
 range spin flip rates.

To present this decomposition we need the following notation.  
We define
\[
M_i= 2e^{\beta \sum_{B : i \in B } | J_B| },
\]
and the sequence $ (\lambda_i(k))_{k \ge 0}$ as follows
\begin{eqnarray*}
 \lambda_i (k) = \left\{
 \begin{array}{ll}
  { e^{ - 2 \beta \sum_{B : i \in B} | J_B| }} & \mbox{ if } k = 0,\\ 
  e^{ -
 \beta \sum_{B : i \in B, B \not \subset B_i (1) } | J_B|
  } - e^{ - 2 \beta \sum_{B : i \in B} | J_B|  }& \mbox{ if } k = 1,\\ 
  e^{ - \beta \sum_{B : i \in B, B \not \subset B_i (k)} | J_B| } - e^{ - \beta  \sum_{B : i \in B, B \not \subset B_i (k- 1 )} | J_B|  } &
 \mbox{ if } k > 1.
 \end{array} \right.  
 \end{eqnarray*}
 
 Finally, for all $\sigma \in S $ and
for any $k \geq 2,$ we define the update probabilities
\begin{multline*}
 p^{[k]}_i (- \sigma (i) | \sigma )  =\frac{1}{M_i} e^{- \beta
\sum_{B : i \in B, B \subset B_i ( k-1) }  J_B \chi_B ( \sigma)} \\ 
\frac{ e^{- \beta \sum_{B : i \in B, B \subset B_i (k), B \not \subset B_i (k-1)} J_B \chi_B (\sigma)  } - e^{- \beta \sum_{B : i \in B, B \subset B_i (k), 
B \not \subset B_i (k-1)  }| J_B|  }}{ 
1 - e^{- \beta \sum_{B : i \in B, B \subset B_i (k), 
B \not \subset B_i (k-1)  } | J_B|  }} \, ,
\end{multline*}
and for $ k = 1, $
\begin{eqnarray*}
 p^{[1]}_i (-\sigma (i) | \sigma ) = \frac{1}{M_i} \frac{ 
 e^{- \beta
     \sum_{B : i \in B , B \subset B_i (1)   }J_B \chi_B( \sigma)   } - e^{-
     \beta \sum_{ B : i \in B, B \subset B_i (1)  } | J_B| }}
     { 1 - e^{ - 2 \beta
     \sum_{ B : i \in B , B \subset B_i (1)  } | J_B|  } e^{ - \beta \sum_{ B : i \in B, B \not \subset B_i (1) } | J_B| } 
     }.
\end{eqnarray*}
To obtain a probability measure on $A,$ we define for all $k \geq 1,$
$$ p^{[k]}_i (\sigma (i) | \sigma  ) = 1 -p^{[k]}_i (-\sigma (i) | \sigma  ) .$$
Finally, for $k = 0,$ we define 
\begin{equation}\label{p0}  p_i^{[0]} (1) = p_i^{[0]} (-1) = \frac{1}{2} . \end{equation}
Since this last probability does not depend on the site $i$ we omit
the subscript $i$.  Note that, by construction, for any integer $k \ge
1$, the probabilities $ p_i^{[k]} (a|\sigma ) $ depend only on $\sigma
(B_i (k)) .$

Now we   state the decomposition theorem.

\begin{theo}\label{theo:dec} Let us assume that the uniform summability condition 
  (\ref{B3}) holds.
\begin{enumerate}

\item The sequence $ (\lambda_i(k))_{k \ge 0}$ defines a probability
  distribution on the set of positive integers $\{0, 1,...\}$.

\item For any $\ell \ge1$, $\sigma \in S$, the following decomposition
  holds
\begin{equation}\label{eq:decompositionL}
c_i^{[\ell]} ( \sigma) = M_i \left[ \lambda_i (0) \frac12 + \sum_{k=
1}^{\ell} \lambda_i (k) p_i^{[k]} (-\sigma (i)| \sigma )\right] .
\end{equation}

\item For any $\sigma \in S$, the following decomposition holds
\begin{equation}
c_i ( \sigma) = M_i \left[ \lambda_i (0) \frac12 + \sum_{k=
1}^{\infty} \lambda_i (k) p_i^{[k]} (-\sigma (i)| \sigma )\right].
\end{equation}\end{enumerate}
\end{theo}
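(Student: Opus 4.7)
My plan is to proceed in three steps mirroring the three parts of the theorem, with telescoping as the central algebraic device. For conciseness I write $u_k(i) := \sum_{B : i \in B,\, B \not\subset B_i(k)} |J_B|$ and $S_k(\sigma) := \sum_{B : i \in B,\, B \subset B_i(k)} J_B\chi_B(\sigma)$.

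For part 1, I would first check non-negativity of each $\lambda_i(k)$. For $k \geq 2$ this is immediate from the monotonicity of $u_k(i)$ (since $B_i(k-1) \subset B_i(k)$ the set of $B$'s shrinks, so $u_k(i) \leq u_{k-1}(i)$); for $k=1$ it reduces to $u_1(i) \leq 2 \sum_{B: i \in B}|J_B|$, which is trivial. To see that $\sum_{k \geq 0}\lambda_i(k)=1$, I would note that $u_0(i) = \sum_{B : i \in B}|J_B|$ (because $|B|\geq 2$ forbids $B \subset \{i\}=B_i(0)$) and that $u_k(i) \to 0$ as $k \to \infty$ by \eqref{B3}. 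The tail telescopes to $\sum_{k\geq 1}\lambda_i(k) = \lim_k e^{-\beta u_k(i)} - e^{-2\beta u_0(i)} = 1 - e^{-2\beta u_0(i)}$, which together with $\lambda_i(0) = e^{-2\beta u_0(i)}$ yields $1$.

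For part 2, the heart of the argument is the identity, valid for every $k \geq 2$,
\begin{equation*}
M_i\,\lambda_i(k)\, p_i^{[k]}\bigl(-\sigma(i)\mid \sigma\bigr) \;=\; e^{-\beta\bigl(S_k(\sigma) + u_k(i)\bigr)} - e^{-\beta\bigl(S_{k-1}(\sigma) + u_{k-1}(i)\bigr)},
\end{equation*}
which follows from a direct computation using $\lambda_i(k) = e^{-\beta u_k(i)}\bigl(1 - e^{-\beta a_k(i)}\bigr)$, with $a_k(i) = u_{k-1}(i) - u_k(i)$: the factor $1 - e^{-\beta a_k(i)}$ cancels the denominator in $p_i^{[k]}$, and the remaining exponentials combine using $S_k = S_{k-1} + A_k$ to give the displayed right-hand side. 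The analogous computation for $k=1$ yields $e^{-\beta(S_1(\sigma)+u_1(i))} - e^{-\beta u_0(i)}$, and since $M_i\lambda_i(0)/2 = e^{\beta u_0(i)}\cdot e^{-2\beta u_0(i)} = e^{-\beta u_0(i)}$, the constant $-e^{-\beta u_0(i)}$ is precisely absorbed. Summing $k=1,\dots,\ell$ telescopes to $e^{-\beta(S_\ell(\sigma)+u_\ell(i))}$, which by the definitions of $r_i^{[\ell]}$ and $c_i^{[\ell]}$ equals $c_i^{[\ell]}(\sigma)$.

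Part 3 is a passage to the limit: by \eqref{B3}, as $\ell \to \infty$ one has $S_\ell(\sigma) \to \sum_{B: i\in B} J_B \chi_B(\sigma)$ and $u_\ell(i) \to 0$, so the already-established identity in part 2 gives, in the limit, the infinite decomposition with $c_i(\sigma) = e^{-\beta \sum_{B:i\in B} J_B \chi_B(\sigma)}$; monotonicity of the partial sums (each added term being non-negative) guarantees the convergence of the series. The only delicate point is the boundary case $k=1$: the asymmetric exponent $2\beta$ in $\lambda_i(0)$ and the unusual denominator of $p_i^{[1]}$ are what make the initial constant $e^{-\beta u_0(i)}$ cancel and the full sum collapse onto $c_i^{[\ell]}(\sigma)$, so keeping the bookkeeping straight at $k=1$ is essentially the only non-routine part of the proof.
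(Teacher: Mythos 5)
Your proof is correct and follows essentially the same route as the paper: the identity $M_i\lambda_i(k)\,p_i^{[k]}(-\sigma(i)\mid\sigma)=c_i^{[k]}(\sigma)-c_i^{[k-1]}(\sigma)$ (with $c_i^{[0]}=\tfrac12 M_i\lambda_i(0)$) followed by telescoping, and then the limit $\ell\to\infty$ using \eqref{B3}; you merely spell out the cancellation that the paper asserts ``by definition.'' No gaps.
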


\begin{proof}
Item 1 follows directly from the definition and \eqref {B3}. 

To prove item 2, set
$$ c^{[0]}_i (\sigma )= \frac12 M_i \lambda_i (0)$$ and observe that
$$ c^{[\ell]}_i (\sigma ) = \sum_{ k = 1}^{\ell} \left[ c_i^{[k]} (
\sigma ) - c_i^{[k-1]} (\sigma ) \right] + c^{[0]}_i (\sigma ) ,$$
where the differences $  c_i^{[k]} (
\sigma ) - c_i^{[k-1]} (\sigma )$ are always positive.
Since by definition
\[ 
 c_i^{[k]} ( \sigma ) - c_i^{[k-1]}( \sigma )= M_i \lambda_i (k ) \,
 p_i^{[k]} (- \sigma (i) |\sigma ) \, ,
\]
we obtain the decomposition stated in item 2.

To prove item 3, observe that the uniform summability condition (\ref{B3}) implies that
\begin{equation}
 \lim_{k \to \infty} c^{[k]}_i (\sigma) = c_i(\sigma )\,  ,
\end{equation}
since $ r_i^{[k]} \to 0 $ for $k \to \infty .$ 
Thus, taking into account item 2, we obtain the desired decomposition.

\end{proof}

To present the coupling, it is convenient to rewrite the generator of
the Glauber dynamics given in \eqref {eq:generator} as
\begin{equation}
  \label{eq:generatormodi}
  \LL \,f(\sigma) \,=\, \sum_{i \in \ZZ^d} \sum_{a \in A} c_i
  (a|\sigma) [f(\sigma^{i, a}) - f(\sigma)] \, ,
\end{equation}
where 
$$\sigma^{i,a}(j) = \sigma(j) \mbox{, for all $j \neq i,$ and
  $\sigma^{i,a}(i) = a,$}$$ and where
\begin{equation}  \label{A21}  c_i (a| \sigma ) =\left\{
  \begin{array}{ll}
 c_i (\sigma) &\mbox { if } a = - \sigma (i), \\ M_i - c_i (\sigma) &
 \mbox { if } a = \sigma (i)\, .
\end{array} 
\right.     \end{equation}

In a similar way, for any integer $\ell \geq 1, $ we define the rates
\begin{equation}  \label{A21B}  c^{[\ell]}_i (a| \sigma ) =\left\{
  \begin{array}{ll}
 c^{[\ell]}_i (\sigma) &\mbox { if } a = - \sigma (i), \\ M_i \sum_{k
 = 0}^\ell \lambda_i (k) - c^{[\ell]} _i (\sigma) & \mbox { if } a =
 \sigma (i)
\end{array} 
\right.     \end{equation}
and the generator 
\begin{equation}
  \label{eq:generatormodiL}
  \LL^{[\ell]} \,f(\sigma) \,=\, \sum_{i \in \ZZ^d} \sum_{a \in A}
  c^{[\ell]}_i (a|\sigma) [f(\sigma^{i,a }) - f(\sigma)] \, .
\end{equation}

This amounts to include in the generators the rates of invisible jumps
in which the spin $i$ is updated with the same value it had before.
Obviously the generator defined in \eqref {eq:generator} (and in
\eqref{A8}) defines the same stochastic dynamics as the generator
given in \eqref {eq:generatormodi} (and in \eqref{eq:generatormodiL},
respectively).

With this representation, we have the following corollary of Theorem 
\ref{theo:dec}. 

\begin{cor}\label{eq:decomposition}
$$ \LL \,f(\sigma) \,=\, \sum_{i \in \ZZ^d} \sum_{a \in A} \sum_{k
 \geq 0} M_i \, \lambda_i(k)p_i^{[k]} (a|\sigma) [f(\sigma^{i,a}) -
 f(\sigma)] \, ,
$$
and for any $\ell \geq 1 , $
$$ \LL^{[\ell]} \,f(\sigma) \,=\, \sum_{i \in \ZZ^d} \sum_{a \in A}
\sum_{k = 0}^{\ell} M_i \, \lambda_i(k)p_i^{[k]} (a|\sigma)
[f(\sigma^{i, a}) - f(\sigma)] \, .
$$
\end{cor}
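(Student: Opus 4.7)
The plan is to obtain both identities as a direct substitution of Theorem \ref{theo:dec} into the reformulated generators \eqref{eq:generatormodi} and \eqref{eq:generatormodiL}. Everything has been arranged so that the bookkeeping matches term by term, so no new estimate is needed.

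For the infinite range case, I would split the inner sum over $a \in A$ in \eqref{eq:generatormodi} into $a = -\sigma(i)$ and $a = \sigma(i)$. By definition \eqref{A21}, the coefficient equals $c_i(\sigma)$ in the first case and $M_i - c_i(\sigma)$ in the second. For the first, item 3 of Theorem \ref{theo:dec} together with the convention $p^{[0]}(a) = \frac12$ from \eqref{p0} gives
$$
c_i(\sigma) = \sum_{k \geq 0} M_i \, \lambda_i(k) \, p_i^{[k]}(-\sigma(i) \,|\, \sigma).
$$
For the second, item 1 of Theorem \ref{theo:dec} asserts that $\sum_k \lambda_i(k) = 1$, so combining with $p_i^{[k]}(\sigma(i) \,|\, \sigma) = 1 - p_i^{[k]}(-\sigma(i) \,|\, \sigma)$ yields
$$
M_i - c_i(\sigma) = \sum_{k \geq 0} M_i \, \lambda_i(k) \, p_i^{[k]}(\sigma(i) \,|\, \sigma).
$$
Inserting both identities back into \eqref{eq:generatormodi} produces the stated expression for $\LL f(\sigma)$.

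The truncated case $\LL^{[\ell]}$ is handled in exactly the same manner, now invoking item 2 of Theorem \ref{theo:dec} in place of item 3. For $a = -\sigma(i)$ one uses the explicit decomposition of $c_i^{[\ell]}(\sigma)$; for $a = \sigma(i)$ one rewrites $M_i \sum_{k=0}^{\ell} \lambda_i(k) - c_i^{[\ell]}(\sigma)$ from \eqref{A21B} as $\sum_{k=0}^{\ell} M_i \, \lambda_i(k) \, p_i^{[k]}(\sigma(i) \,|\, \sigma)$ in the same way. Note that the $a = \sigma(i)$ contributions multiply $f(\sigma^{i,\sigma(i)}) - f(\sigma) = 0$ and thus vanish; retaining them is nevertheless essential for the probabilistic interpretation exploited in the perfect simulation algorithm of Section \ref{algo}.

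The only bookkeeping point to watch is the treatment of $k = 0$: the update probability $p^{[0]}$ is constant $\frac12$ rather than depending on $\sigma$, so the two values $a = \pm 1$ occur symmetrically and match precisely the isolated $\lambda_i(0)\frac12$ term in the decomposition of Theorem \ref{theo:dec}. Consequently there is no genuine obstacle; the corollary is a reformulation of Theorem \ref{theo:dec} in the extended notation with self-loops included, carried out under the uniform summability condition \eqref{B3} which guarantees that all series involved converge absolutely.
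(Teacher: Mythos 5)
Your proposal is correct and matches the paper's (implicit) argument: the corollary is stated there without proof as an immediate consequence of Theorem \ref{theo:dec} combined with the definitions \eqref{A21} and \eqref{A21B}, and your explicit substitution --- splitting the sum over $a$ into $a=-\sigma(i)$ and $a=\sigma(i)$, using item 3 (resp.\ item 2) for the former and the normalization $\sum_k \lambda_i(k)=1$ together with $p_i^{[k]}(\sigma(i)|\sigma)=1-p_i^{[k]}(-\sigma(i)|\sigma)$ for the latter --- is exactly the intended verification. Your remarks on the vanishing self-loop terms and the $k=0$ bookkeeping are accurate and consistent with the paper's conventions.
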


From now on, we fix $\ell=L$.  The decomposition given in Corollary
\ref{eq:decomposition} suggests the following construction of the
Glauber dynamics having $\LL $ and $\LL^{[L]}$ as infinitesimal
generator.  For each site $i \in \Z^d$ consider a Poisson point
process $N^i$ having rate $M_i$. The Poisson processes corresponding
to distinct sites are all independent. If, at time $t$, the Poisson
clock associated to site $i$ rings, we choose a range $k$ with
probability $\lambda_i (k)$ independently of everything else. Then we
update the value of the configuration at this site by choosing a
symbol $a$ with probability $p_i^{[k]} (a | \sigma (B_i (k)) )$
depending only on the configurations inside the set $B_i (k) .$ It is
clear that using this decomposition, we can construct both processes
$\sigma_t$ and $\sigma_t^{[L]} $ in a coupled way, starting from any
initial configuration.

Actually we can do better than this. We can make a perfect simulation
of the pair of measures $\mu $ and $\mu^{[L]} $ which are the
invariant probability measures of the processes having generators $\LL
$ and $\LL^{[L]}$ respectively.

Let us explain how we sample the configuration at a fixed site $i \in
\Z^d$ under the measure $\mu.$ The simulation
procedure has two stages. In the first stage we determine the set of
sites whose spins influence the spin at site $i$ under equilibrium. We
call this stage backward sketch procedure. It is done by climbing up
from time $0$ back to the past a reverse time Poisson point process
with rate $M_i$ until the last time the Poisson clock rung. At that
time, choose a range $k$ with probability $\lambda_i (k) .$ If $k =
0,$ we decide the value of the spin with probability $\frac12 $
independently of anything else. If $k$ is different from zero, we
restart the above procedure from any of the sites $j \in B_i (k) .$
The procedure stops whenever each site involved in this backward time
evolution has chosen a range $0.$

When this occurs, we can start the second stage, in which we go back
to the future assigning spins to all sites visited during the first
stage. We call this procedure forward spin assignment procedure. This
is done from the past to the future by using the update probabilities
$p_i^ {[k]} $ starting at the sites which ended the first
procedure by choosing range $0.$ For each one of these
sites a spin is chosen by tossing a fair coin.  The values obtained in
this way enter successively in the choice of the values of the spins
depending on a neighborhood of range greater or equal to 1.

In the case of the measure $\mu^{[L]} $ we use the same procedure, but
only considering the choices of $k $ which are smaller or equal to
$L.$ These procedures will be described formally in the next section. 

\section{Perfect simulation of the measure  $\mu $ and coupling }\label{algo}

\subsection { Backward sketch procedure} 

 Before describing formally the two algorithms described at
the end of the last section, let us define the stochastic process
which is behind the backward sketch procedure. Our aim is to define,
for every site $i \in \Z^d ,$ a process $(C_s^{(i)})_{ s \geq 0 }$ taking
values in the set of finite subsets of $\Z^d,$ such that $C_s^{(i)}$
 is the set of
sites at time $- s$ whose spins affect the spin of site $i$ at time $t =
0.$ This is done as follows.

For each $ i \in \Z^d, $ denote by $\ldots T_{-2}^i <T_{-1}^i < T_{0}^i < 0 < T_1^i < T_2^i <
\ldots$ the occurrence times of the rate $M_i$ Poisson point process
$N^i $ on the real line. The Poisson point processes associated to
different sites are independent. To each point $T_n^i$ associate an
independent mark $K^i_n$ according to the probability distribution
$(\lambda_i(k))_{k \ge 0}$. As usual, we identify the Poisson point
processes and the counting measures through the formula
$$N^i[s,t] \,=\, \sum_{n \in \Z} \one_{\{ s \le T_n^i \le t\}}.$$

The backward point process starting at
time $0,$ associated to site $i \in \Z^d $ is defined as
\begin{eqnarray}
  \label{eq:tildet}
  {T}^{(i,0)}_n &=& - T^i_{-n+1}, \mbox{ for any } n \geq 1 . 
 \end{eqnarray}
 We also define the associated marks
\begin{eqnarray}
  \label{eq:tildet1}
  {K}^{(i,0)}_n &=& K^i_{-n+1} 
 \end{eqnarray}
and for any $k \ge 0$, the backward $k$-marked
 Poisson point process starting at time $0$ as 
 \begin{equation}
   \label{eq:tilden}
   {N}^{(i,k)}[s,u] \,=\, \sum_{n \in \Z } \one_{\{s \le
   {T}^{(i,0)}_{n} \le u\}} \one_{\{{K}^{(i, 0 )}_n = k\}}.
 \end{equation}

 To define the backward sketch process we need to introduce a family
 of transformations $\{\pi^{(i,k)}, i \in \Z^d, k \ge 0\}$ on 
 $ {\cal F}(\Z^d),$ the set
 of finite subsets of $\Z^d,$ defined as
 follows. For any unitary set $\{j\} $ and $k \geq 1,$
 \begin{equation}
   \label{eq:pij}
   \pi^{(i,k)}(\{j\}) \,=\, \left\{ \begin{array}{ll} B_i (k), &
                                   \mbox{ if } j=i, \\ \{j\}, & \mbox{
                                   otherwise}.
                                    \end{array} \right.
 \end{equation}
 For $k=0,$ we define
\begin{equation}
   \label{eq:pij0}
   \pi^{(i,0)}(\{j\}) \,=\, \left\{ \begin{array}{ll}
                                   \emptyset , & \mbox{ if } j=i,\\
                                   \{j\}, & \mbox{ otherwise}.
                                    \end{array} \right.
 \end{equation}
For any set finite set $F \subset \Z^d$, we define similarly
\begin{equation}
  \label{eq:pif}
  \pi^{(i,k)}(F) \,=\, \cup_{j \in F} \pi^{(i,k)}(\{j\}) . 
\end{equation}
The backward sketch process starting at site $i$ at time $0$ will be
denoted by $(C_s^{(i)})_{s \geq 0}.$ The set $C_s^{(i)}$ is the set of
sites at time $- s$ whose spins affect the spin of site $i$ at time $t =
0.$

The evolution of this process is defined through the following
equation. $C_0^{(i)} = \{i\},$ and
\begin{equation}
  \label{eq:ct}
  f( C_s^{(i)}) \,=\, f(C_0^{(i)}) \,+\, \sum_{k \ge 0} \sum_{j \in
  \Z^d} \int_0^s [f(\pi^{(j,k)} (C_{u-}^{(i)})) - f(C_{u-}^{(i)})]\,
  {N}^{(j, k)}(du),
\end{equation}
where $f: {\cal F}(\Z^d) \rightarrow \R$ is any bounded cylinder
function. This family of equations characterizes completely the time
evolution $(C_s^{(i)})_{ s \ge 0}$. 

In a similar way, for the truncated process,
we define its associated backward sketch process by
\begin{equation}
  \label{eq:ctL}
  f( C_s^{[L], (i)}) \,=\, f(C_0^{[L],(i)}) \,+\, \sum_{k = 0}^L
  \sum_{j \in \Z^d} \int_0^s [f(\pi^{(j,k)} (C_{u-}^{[L],(i)})) -
  f(C_{u-}^{[L],(i)})]\, {N}^{(j, k)}(du),
\end{equation}
where we use the same Poisson point processes ${N}^{(j, k)}, 0 \le k
\le L, $ as in (\ref{eq:ct}).

The following proposition summarizes the properties of the family of
processes defined above.

\begin{prop}
  For any site $i \in \Z^d$, $C_s^{(i)}$ and $C^{[L], (i)}_s$, $s \in
  \R^+$, are Markov jump processes having as infinitesimal generator,
\begin{equation}
  \label{eq:generatord}
  {\cal L} f(C) \,=\, \sum_{i \in C} \sum_{k \ge 1} M_i \lambda_i (k) [f(C
  \cup B_i(k)) - f(C)] + \lambda_i (0) [f(C \setminus \{i\}) - f(C)] ,
  \end{equation}
 with  initial condition at time $t=0$,
$C_0^{(i) } = \{i\}$,
  and \begin{equation}
  \label{eq:generatordt}
 {\cal L}^{[L]} f(C) \,=\, \sum_{i \in C} \sum_{k= 1}^L M_i \lambda_i (k)
 [f(C \cup B_i(k)) - f(C)] + \lambda_i (0) [f(C \setminus \{i\}) -
 f(C)] ,
\end{equation} 
  with initial condition at time $t = 0 $, $C^{[L], (i)}_0 = \{i\}, $
  respectively. Here $f: \F (\Z^d) \to \R $ is any bounded cylindrical
  function.
 \end{prop}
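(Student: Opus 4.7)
The plan is to recognize that equation (\ref{eq:ct}) is a standard jump-SDE driven by a countable family of independent Poisson processes, and to read off the generator directly from the form of the transformations $\pi^{(j,k)}$.

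First I would invoke the marking theorem for Poisson processes together with time-reversal invariance: since the $N^i$ are independent Poisson processes of rate $M_i$ and the marks $K_n^i$ are i.i.d.\ with distribution $(\lambda_i(k))_{k\ge0}$ (independent across sites and across $n$), the collection $\{N^{(j,k)} : j \in \Z^d,\, k\ge 0\}$ defined in \reff{eq:tilden} consists of mutually independent Poisson point processes on $\R_+$, with $N^{(j,k)}$ having intensity $M_j\lambda_j(k)$. This is the only probabilistic input needed; the rest of the argument is bookkeeping.

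Next I would translate \reff{eq:ct} into explicit jump dynamics. An atom of $N^{(j,k)}$ at time $s$ sends $C_{s-}^{(i)}$ to $\pi^{(j,k)}(C_{s-}^{(i)})$. By the definitions \reff{eq:pij}--\reff{eq:pif}, if $j \notin C_{s-}^{(i)}$ then $\pi^{(j,k)}$ acts as the identity on $C_{s-}^{(i)}$ and nothing happens. If $j \in C_{s-}^{(i)}$, then for $k \ge 1$ one has $\pi^{(j,k)}(C_{s-}^{(i)}) = (C_{s-}^{(i)}\setminus\{j\})\cup B_j(k) = C_{s-}^{(i)}\cup B_j(k)$ (since $j \in B_j(k)$), while for $k = 0$ one gets $C_{s-}^{(i)}\setminus\{j\}$. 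Consequently, when the process is in state $C$, the only effective jumps are those driven by $N^{(j,k)}$ with $j \in C$, firing at total rate $\sum_{j\in C}\sum_{k\ge 0}M_j\lambda_j(k)$; each such firing produces a deterministic transition matching the summands in \reff{eq:generatord}. Applying the Dynkin formula (i.e.\ taking expectations in \reff{eq:ct} after compensating each $N^{(j,k)}(du)$ by $M_j\lambda_j(k)\,du$) and exchanging sum and integral on the bounded cylinder function $f$ then identifies ${\cal L}$ as the infinitesimal generator and shows $(C_s^{(i)})_s$ is Markov.

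The single delicate point is that $C_s^{(i)}$ is a priori an infinite sum of jump terms, so one must justify non-explosion and the exchange of summation and integration. Since $f$ is a bounded cylinder function depending only on finitely many sites, at any time $s$ only finitely many pairs $(j,k)$ can contribute a nonzero summand to $f(\pi^{(j,k)}(C_{s-}^{(i)})) - f(C_{s-}^{(i)})$, and almost-sure non-explosion of $|C_s^{(i)}|$ on compact time intervals follows from the control on $\sup_i \sum_k |B_i(k)|\lambda_i(k)M_i$ that is implicit in \reff{cn1} and will be exploited quantitatively in the forthcoming Proposition \ref{nstop}. This is the only step that uses the summability assumption, and I would present it as an a priori bound on the growth of the cardinality of $C_s^{(i)}$.

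Finally, the truncated process $C^{[L],(i)}_s$ is handled by the identical argument with the outer sum over $k$ restricted to $\{0,1,\dots,L\}$, since equation \reff{eq:ctL} uses the same Poisson data but only the $L+1$ mark levels up to $L$; this immediately yields the generator ${\cal L}^{[L]}$ in \reff{eq:generatordt}.
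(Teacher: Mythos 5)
Your proposal is correct. Note that the paper states this proposition without any proof (it is introduced only as a summary of the properties of the construction), so there is nothing to compare against; your argument --- marking/thinning of the Poisson processes $N^i$ to get independent Poisson processes $N^{(j,k)}$ of rate $M_j\lambda_j(k)$, reading off the jump transitions from the action of $\pi^{(j,k)}$ (identity when $j\notin C$, $C\cup B_j(k)$ when $j\in C$ and $k\ge1$ since $j\in B_j(k)$, removal of $j$ when $k=0$), and then Dynkin's formula --- is exactly the standard argument the authors implicitly rely on. Your attention to the two points the paper glosses over is well placed: the driving noise in \reff{eq:ct} is an infinite family of Poisson processes, but only the finitely many $j\in C_{u-}$ act non-trivially, so the total effective rate from state $C$ is $\sum_{j\in C}M_j<\infty$ by \reff{B3}; and non-explosion on compact time intervals indeed follows from the finiteness of $\sup_i\sum_k|B_i(k)|\lambda_i(k)$ (a consequence of \reff{cn1}, as computed in Lemma \ref{lemma:1}) via a comparison with a finite-mean branching process, even without the subcriticality \reff{eq:condition2}. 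One small observation your derivation makes implicitly: the removal transition occurs at rate $M_i\lambda_i(0)$, consistent with $N^{(i,0)}$ having intensity $M_i\lambda_i(0)$, so the factor $M_i$ should be read as multiplying the $\lambda_i(0)$ term in \reff{eq:generatord} and \reff{eq:generatordt} as well.
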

Let 
$$ T^{(i)}_{STOP} = \inf \{ s : C_s^{(i)} = \emptyset \} $$
be the relaxation time. 
We
introduce the sequence of successive jump times $\tilde T_n^{(i)} , n
\geq 1 ,$ of processes $N^{(j,k)} $ whose jumps occur in
(\ref{eq:ct}).  Let $\tilde T_1^{(i)} = T_1^{(i,0)} $ and define
successively for $n\geq 2 $
\begin{equation}
  \label {Ajumps} \tilde T_n^{(i)} = \inf \{ t > \tilde T_{n-1}^{(i)} : \, 
\exists j \in C^{(i)}_{\tilde T_{n-1}^{(i)} } , \exists k :
N^{(j,k)} ( ]\tilde T_{n-1}^{(i)} , t ]) = 1 \} . \end{equation}
Now we put
\begin{equation}
  \label {RR5} {\bf C}^{(i)}_n = C^{(i)}_{ \tilde T_n^{(i)} } .\end{equation} Finally, notice that  $ N^{(i)}_{STOP}$ of Definition  \ref {S.2} is equal to 
$$ N^{(i)}_{STOP} = \inf \{ n : {\bf C}^{(i)}_n = \emptyset \} .$$
This is the number of steps of the backward sketch process. 
For the algorithm to be successful, it is crucial to show that both relaxation time
$ T^{(i)}_{STOP}$ and the number of steps $ N^{(i)}_{STOP} $ are finite. This is  the content of next  theorem.

\begin {prop}\label {nstop}
 Under the requirement  
  \begin{equation}
    \label{eq:condition2}
 \sup_{i \in \Z^d}   \sum_{k \ge 1} \, |B_i (k)|  \lambda_i (k)  \,< \,1,
  \end{equation} we have  uniformly for any $i  \in \ZZ^d , $
\begin{equation}
    \label{s3} P(T^{(i)}_{STOP} > t ) \le  e^{ - \gamma  t} 
    \end{equation}
and 
 \begin{equation}
    \label{s4} E( N^{(i)}_{STOP}) \le \frac1\gamma,   \end{equation}
where 
\begin{equation}\label{eq:epsilon}
  \gamma= 1 - \sup_{i \in \Z^d} \sum_{k \ge 1} \, |B_i (k)|  \lambda_i (k).
\end{equation}
   \end{prop}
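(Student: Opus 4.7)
The plan is to prove both parts of the proposition through a single Lyapunov estimate on the cardinality $|C^{(i)}_s|$: the continuous-time version gives the exponential tail \eqref{s3}, and its discrete, embedded-chain version gives \eqref{s4}.

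First I apply the generator $\mathcal{L}$ of \eqref{eq:generatord} to the test function $f(C) = |C|$. A range-$k$ event at site $j \in C$ with $k \ge 1$ changes $|C|$ by $|B_j(k) \setminus C|$, while a range-$0$ event removes $j$. Since $j \in C \cap B_j(k)$, one has $|B_j(k) \setminus C| \le |B_j(k)| - 1$, and combining this with the probability normalisation $\sum_{k \ge 0} \lambda_j(k) = 1$ from Theorem \ref{theo:dec} gives
\begin{equation*}
\mathcal{L}|C| \,\le\, \sum_{j \in C} M_j \left[ \sum_{k \ge 1} \lambda_j(k) |B_j(k)| - 1 \right] \,\le\, -\gamma\, |C|,
\end{equation*}
where the final inequality uses hypothesis \eqref{eq:condition2} together with $M_j \ge 2 \ge 1$.

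To derive \eqref{s3} I localise by $\tau_M := \inf\{s : |C^{(i)}_s| \ge M\}$ so that $|C^{(i)}_{s \wedge \tau_M}|$ has finite expectation; Dynkin's formula and the estimate above then give, via Gronwall's inequality, $E|C^{(i)}_{s \wedge \tau_M}| \le e^{-\gamma s}$, and Fatou as $M \to \infty$ yields $g(s) := E|C^{(i)}_s| \le e^{-\gamma s}$. The tail bound then follows from Markov's inequality:
\begin{equation*}
P(T^{(i)}_{STOP} > s) \,=\, P(|C^{(i)}_s| \ge 1) \,\le\, g(s) \,\le\, e^{-\gamma s}.
\end{equation*}
For \eqref{s4} I turn to the embedded jump chain $\mathbf{C}^{(i)}_n$. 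Given $\mathbf{C}^{(i)}_{n-1} = C \ne \emptyset$, the next event takes place at site $j$ with probability $M_j / \sum_{j' \in C} M_{j'}$ and then selects range $k$ with probability $\lambda_j(k)$, so the same computation, renormalised by $\sum_{j' \in C} M_{j'}$, gives
\begin{equation*}
E\bigl[|\mathbf{C}^{(i)}_n| - |\mathbf{C}^{(i)}_{n-1}| \,\big|\, \mathcal{F}_{n-1}\bigr] \,\le\, -\gamma \qquad \text{on } \{N^{(i)}_{STOP} \ge n\}.
\end{equation*}
Consequently $Y_n := |\mathbf{C}^{(i)}_{n \wedge N^{(i)}_{STOP}}| + \gamma(n \wedge N^{(i)}_{STOP})$ is a nonnegative supermartingale with $Y_0 = 1$, whence $\gamma E[n \wedge N^{(i)}_{STOP}] \le 1$ for every $n$, and monotone convergence delivers $E[N^{(i)}_{STOP}] \le 1/\gamma$.

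The principal technical obstacle is making the Dynkin step rigorous: a priori $C^{(i)}_s$ can be arbitrarily large and the generator involves infinitely many admissible ranges, so the formula cannot be applied directly. I would handle this by first proving all the estimates for the truncated sketch process $C^{[\ell],(i)}_s$, for which $|C^{[\ell],(i)}_s|$ is plainly integrable and the bounds $e^{-\gamma s}$ and $1/\gamma$ hold uniformly in $\ell$, and then pass to the limit $\ell \to \infty$ using the natural monotone coupling between the two sketches built from the shared Poisson marks $N^{(j,k)}$.
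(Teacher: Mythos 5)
Your proposal is correct, and it is worth separating its two halves when comparing with the paper. For \eqref{s4} you are doing essentially what the paper does: the paper proves this bound inside the proof of item 1 of Theorem \ref{theo:perfect} by dominating $|{\bf C}^{(i)}_n|$ with a random walk $S^{(i)}_n$ built from i.i.d.\ increments $D^{(i)}_n$ with $E(D^{(i)}_1)\le -\gamma$, and then applying optional stopping to the supermartingale $S^{(i)}_n+n\gamma$ at the truncated time $V^{(i)}_{STOP}\wedge N$; your version runs the same supermartingale/optional-stopping argument directly on $Y_n=|{\bf C}^{(i)}_{n\wedge N^{(i)}_{STOP}}|+\gamma (n\wedge N^{(i)}_{STOP})$, which is a little more streamlined but not a different idea. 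For \eqref{s3} the situation is different: the paper gives no argument at all and simply quotes it as part of Theorem \ref{theo:GGL1}, i.e.\ as imported from Galves et al.\ (2008), whereas you supply a self-contained Lyapunov proof via $\mathcal{L}|C|\le -\gamma |C|$, Dynkin/Gronwall, and Markov's inequality. That is a genuine added value of your write-up. Two small cautions. First, the Gronwall step with the localization $\tau_M$ is stated a bit too quickly: the cleanest route is to check that $e^{\gamma (s\wedge\tau_M)}|C^{(i)}_{s\wedge\tau_M}|$ is a supermartingale and then remove the localization, rather than to claim $E|C^{(i)}_{s\wedge\tau_M}|\le e^{-\gamma s}$ directly (the stopped process no longer decays after $\tau_M$). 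Second, the passage $\ell\to\infty$ from the truncated sketch processes needs the a.s.\ identity $C^{(i)}_s=\bigcup_\ell C^{[\ell],(i)}_s$, which implicitly uses non-explosion of the full process; under \eqref{eq:condition2} together with $\sup_j M_j<\infty$ and $\sup_j|B_j(k)|=|B_0(k)|$ this is standard, but it should be said. Neither point is a gap in the idea, only in the level of detail, and on both counts your argument is more explicit than the paper's.
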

The proof of  Proposition \ref{nstop} will be given at the end of Section \ref{sectionafter} below.

\subsection {Algorithms} 

We now go to the crucial point of precisely defining the maps $F$ and $G$
whose existence is claimed in Theorem \ref{theo:perfect}. 
These  maps are  implicitly defined by the backward sketch procedure
and the forward spin assignment procedure. Let us give the
algorithmic like description of these procedures.    
The following variables will be used. 

\begin{itemize}
\item $N$ is an auxiliary variables taking values in the set of
non-negative integers $ \{ 0, 1,2, \ldots \} $
\item $N^{(i)}_{STOP}$ is a counter taking values in the set of
non-negative integers $ \{ 0, 1, 2, \ldots \} $
\item  $N^{[L], (i)}_{STOP}$ is a counter taking values in the set of non-negative integers
$ \{ 0, 1, 2, \ldots \} $
\item
$I $ is a variable taking values in $\Z^d$ 
\item
$I^{[L]} $ is a variable taking values in $\Z^d$
\item
$K$ is a variable taking values in $\{  0, 1, \ldots \}$
\item
$K^{[L]}$ is a variable taking values in $\{  0, 1, \ldots \}$
\item
$B $ is an array of
elements of $\Z^d \times \{ 0 , 1 , \ldots \} $
\item
$B^{[L]}$ is an array of
elements of $\Z^d \times \{  0 , 1 , \ldots \} $
\item
$C$ is a variable taking values in the set of finite subsets of $\Z^d$ 
\item
$C^{[L]}$ is a variable taking values in the set of finite subsets of $\Z^d$ 
\item $ W  $ is an auxiliary variable taking values in $ A$
\item $X $ is a function from $\Z^d $ to $A \cup \{ \Delta \} ,$
  where $\Delta $ is some extra symbol that does not belong to $A$
\item $X^{[L]} $ is a function from $\Z^d $ to $A \cup \{ \Delta \} ,$
  where $\Delta $ is some extra symbol that does not belong to $A$

\end{itemize}
We will present  the  backward sketch procedure only for  $G$, the coupled perfect simulation 
algorithm. The backward sketch procedure for  $F$ can be immediately  deduced ignoring the steps  from $11.$ to $18.$  concerning the variables with superscript $L$.

\underline{{\bf Algorithm 1 } Backward sketch procedure}
 
\begin{enumerate}
\item
{\it Input:} $\{i\} $; {\it Output:} $N^{(i)}_{STOP}$,  $N^{[L], (i)}_{STOP}$, $B,$ $B^{[L]}$ 
\item
 $N \leftarrow 0,$ $N^{(i)}_{STOP} \leftarrow 0 ,$ $N^{[L], (i)}_{STOP} \leftarrow 0 ,$ $ B \leftarrow \emptyset ,$ $ B^{[L]} \leftarrow \emptyset ,$
$ C \leftarrow \{i\} , $
$ C^{[L]} \leftarrow \{i\}  $  
\item
WHILE {$C \neq \emptyset $ } 
\item
$N \leftarrow N+1 $ 
\item Choose independent random variables $S_N^{(j,k)} $ having exponential distribution of parameter $M_j \lambda_j(k) $ for any $j \in C$ and for any $ k \geq 0$ 
\item Choose $(I, K) = \arg \min \{ S_N^{(j,k)} , j \in C , k \geq 0 \} $
\item IF {$K = 0,$} {$C \leftarrow C \setminus \{ I \}$}
\item
ELSE  $ C \leftarrow C \cup B_I (K)$
\item
ENDIF
\item $ B (N) \leftarrow  (I, K ) $
\item
IF {$C^{[L]} \neq \emptyset $} 
\item \hspace{1cm}  $N^{[L], (i)}_{STOP} \leftarrow N $
\item\hspace{1cm} Choose $ (I^{[L]}, K^{[L]}) = \arg \min \{ S_N^{(j,k)} , j \in C , 0 \le k \le L \} $
\item \hspace{1cm}
IF {$K^{[L]} = 0,$} 
 {$C^{[L]} \leftarrow C^{[L]} \setminus \{ I^{[L]} \}$}
\item \hspace{1cm}
ELSE 
 $ C^{[L]} \leftarrow C^{[L]} \cup B_{I^{[L]}} (K^{[L]})$
\item \hspace{1cm}
ENDIF
\item \hspace{1cm} $ B^{[L]} (N) \leftarrow  (I^{[L]}, K^{[L]} )$
\item
ENDIF
\item
ENDWHILE
\item $N^{(i)}_{STOP} \leftarrow N $
\item
RETURN  $N^{(i)}_{STOP}$, $ N^{[L], (i)}_{STOP},$ $B$, $B^{[L]}$  
\end{enumerate}

If $B = B^{[L]},$ then we use the following Forward spin assignment
procedure to sample $X (i) = X^{[L]} (i) .$ This is the algorithmic
translation of the ideas presented in the last paragraph of Section 4.
If $B \neq B^{[L]},$ then we use the Forward spin assignment
procedure twice independently, starting with input $N^{(i)}_{STOP}$,
$B$ in order to sample $X(i) $ and starting with input $N^{[L],
(i)}_{STOP}$, $B^{[L]}$ in order to sample $X^{[L]} (i) .$ \\

\underline{{\bf Algorithm 2} Forward spin assignment procedure}
\begin{enumerate}
\item {\it Input:} $N^{(i)}_{STOP}$, $B$; {\it Output:} $\{X(i) \}$ 
\item $ N \leftarrow N^{(i)}_{STOP }   $
\item $X (j) \leftarrow \Delta $ for all $ j \in  \Z^d  $
\item WHILE {$N \ge 1$}
\item $ (I,K) \leftarrow B(N)  .$ 
\item IF {$K= 0 $} choose $W $ randomly in $A$
according to the probability distribution
$$ P( W = v)  = p_I^{[0]} (v ) = \frac 12 $$
\item ELSE {choose $W $ randomly in $A$
according to the probability distribution
$$ P( W  = v)  = p_I^{[K]} ( v | X )$$}
\item ENDIF
\item $ X (I) \leftarrow W $
\item { $ N \leftarrow N-1 $}
\item ENDWHILE 
\item RETURN $ \{ X (i)  \}  $
\end{enumerate}

\begin{rem} At a first look to steps 5 and 6 of Algorithm 1, the 
reader might think that the simulation of an infinite number of
exponential variables is necessary in order to perform the
algorithm. Actually, it is sufficient to simulate a finite number of
finite valued random variables, see Knuth and Yao (1976) and also
Section 10 of Galves et al. (2008).
\end{rem}

\section{Proof of Theorem  \ref{theo:perfect}   }
\label{sectionafter}
This section is devoted to the proof of Theorem  \ref{theo:perfect}. 
For the convenience of the reader, we start by recalling the following theorem of Galves et al. (2008). 

\begin {theo}\label {theo:GGL1}
 Under the  requirement  
 \eqref{eq:condition2}, 
 $$\sup_{i \in \Z^d}   \sum_{k \ge 1} \, |B_i (k)|  \lambda_i (k)  \,< \,1,
$$
the Gibbs measure $\mu $ is the unique invariant probability measure
of the process $(\sigma_t)_t.$
The output $X(i) $ obtained using successively Algorithms 1 and 2
given in Section \ref{algo} is a perfect sampling of the Gibbs measure
$\mu .$ Finally, the estimate (\ref{s3}) holds :
$$P(T^{(i)}_{STOP} > t ) \le  e^{ - \gamma  t}  .$$
\end{theo}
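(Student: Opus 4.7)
The claim factorizes into three pieces: invariance and uniqueness of $\mu$ for $(\sigma_t)_t$, the perfect-sampling property of $X(i)$, and the exponential tail bound on $T^{(i)}_{STOP}$. Invariance of $\mu$ is immediate from the detailed-balance identity $c_i(\sigma)/c_i(\sigma^i) = \mu(\sigma^i)/\mu(\sigma)$, which follows from \eqref{B4} (the same argument as for $\mu^{[\ell]}$ in the proposition preceding Corollary \ref{eq:decomposition}). Uniqueness will fall out of the perfect-sampling construction: for any invariant $\nu$, realize the stationary dynamics $(\sigma_t)_{t \in \R}$ with marginal $\nu$ on the graphical Poisson space; on the a.s.\ event $\{T^{(i)}_{STOP} < \infty\}$, Algorithm 2 reconstructs $\sigma_0(i)$ using only the Poisson marks $\{N^{(j,k)}\}$ and the update uniforms, with no input from the initial condition. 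Hence $\nu$ and $\mu$ agree marginally at every site; running the backward sketch jointly at several sites extends the argument to all finite cylinders.

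For the perfect-sampling property, I would view Corollary \ref{eq:decomposition} as exhibiting the stationary Glauber dynamics as a deterministic function of the Poisson processes $\{N^{(j,k)}\}$ and independent uniform variables driving the updates $p^{[k]}_j$. The backward sketch $(C^{(i)}_s)_{s \ge 0}$ tracks precisely which sites at past times feed into $\sigma_0(i)$: a range-$k$ mark at site $j$ adjoins the ball $B_j(k)$ because those spins are required by $p^{[k]}_j$, whereas a range-$0$ mark terminates the dependency at $j$ by prescribing a fair-coin update. On $\{T^{(i)}_{STOP} < \infty\}$, every branch has terminated by a range-$0$ mark, and Algorithm 2 is exactly the forward-in-time propagation of those fair-coin seeds through the successive $p^{[K]}_I$-updates. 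The output therefore coincides with $\sigma_0(i)$ for the stationary dynamics and has law $\mu$ at $i$.

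For the tail bound, I would apply the Lyapunov function $f(C) = |C|$ to the generator \eqref{eq:generatord}. Using $\lambda_j(0) = 1 - \sum_{k \ge 1} \lambda_j(k)$ and $|C \cup B_j(k)| - |C| \le |B_j(k)| - 1$,
\[
{\cal L} f(C) \;\le\; \sum_{j \in C} M_j \Bigl[ -1 + \sum_{k \ge 1} \lambda_j(k) |B_j(k)| \Bigr] \;\le\; -\gamma \bigl(\inf_j M_j\bigr) |C|,
\]
so Gronwall gives $E[|C^{(i)}_t|] \le e^{-\gamma (\inf_j M_j) t}$ and Markov converts this into $P(T^{(i)}_{STOP} > t) \le E[|C^{(i)}_t|]$. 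The main obstacle I foresee is recovering precisely the constant $\gamma$ in the exponent rather than $\gamma \cdot \inf_j M_j$; this likely requires passing to the embedded jump chain $\mathbf{C}^{(i)}_n$ together with explicit control of the Poisson holding times. The companion bound $E(N^{(i)}_{STOP}) \le 1/\gamma$ is cleaner: conditional on $\mathbf{C}^{(i)}_{n-1} = C$, the firing site $j \in C$ is picked with probability $M_j/\sum_{j' \in C} M_{j'}$, the expected size change given $j$ equals $-1 + \sum_{k \ge 1} \lambda_j(k)|B_j(k)| \le -\gamma$ by \eqref{eq:condition2}, so $(|\mathbf{C}^{(i)}_n| + \gamma n)_{n \ge 0}$ is a supermartingale and optional stopping at $N^{(i)}_{STOP}$ closes the argument.
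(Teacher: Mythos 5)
Your outline is essentially sound, but note first that the paper does not actually prove Theorem \ref{theo:GGL1}: it is recalled verbatim from Galves et al.\ (2008), with only the remark that the proof there (written for the minimal decomposition of the rates) carries over to the explicit decomposition of Section 4. So the only piece for which the paper supplies its own argument is the companion bound $E(N^{(i)}_{STOP})\le 1/\gamma$, proved inside Theorem \ref{theo:perfect} via a dominating random walk $S^{(i)}_n=\sum_k D_k^{I^{(i)}_k}$ with drift $\le-\gamma$ and optional stopping; your supermartingale $|{\bf C}^{(i)}_n|+\gamma n$ is the same argument, phrased slightly more directly. Your treatment of invariance (reversibility), uniqueness (insensitivity to the initial condition on $\{T^{(i)}_{STOP}<\infty\}$) and the identification of the algorithm output with $\sigma_0(i)$ of the stationary dynamics is the standard route and matches what the cited reference does.

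Two remarks on your continuous-time tail bound. The obstacle you foresee is not one: since $M_j=2e^{\beta\sum_{B\ni j}|J_B|}\ge 2$ for every $j$, your Lyapunov computation gives
\[
P(T^{(i)}_{STOP}>t)\le E\bigl(|C^{(i)}_t|\bigr)\le e^{-\gamma(\inf_j M_j)\,t}\le e^{-2\gamma t}\le e^{-\gamma t},
\]
which is \eqref{s3} (indeed slightly stronger); no passage to the embedded jump chain is needed. What does deserve a word is the legitimacy of applying the generator \eqref{eq:generatord} to the unbounded function $f(C)=|C|$ and invoking Gronwall: the generator is defined on bounded cylinder functions, so you should localize (stop the process when $|C^{(i)}_s|$ first exceeds $n$, apply Dynkin's formula, and let $n\to\infty$ using the a priori integrability supplied by the comparison with the random walk). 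The same a.s.\ finiteness of the cluster of influence is also what justifies the graphical construction of the infinite-range dynamics from an arbitrary initial configuration, which your uniqueness argument tacitly uses. These are standard technicalities under \eqref{eq:condition2}, but they are the only places where your sketch is incomplete.
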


The same result holds true also for $X^{[L]} $ obtained using
successively Algorithms 1 and 2 but now with restriction that only
ranges $k \le L $ are considered. 

\begin{rem}
In Galves et al. (2008) the above result  was proved for the minimal decomposition
of the spin flip rates. The same proof works for the decomposition of the spin flip rates $c_i(\sigma)$ 
considered in Section 4. Note that in contrary to the decomposition considered in Galves et al. (2008),
the decomposition in Section 4 is explicit in  terms of the interaction. 
\end{rem}

We first check that the
assumption (\ref{cn1}) of Theorem \ref{theo:perfect} implies condition
(\ref{eq:condition2}) of Theorem \ref{nstop}. 

\begin{lem}\label{lemma:1}
Under the conditions of Theorem \ref{theo:perfect}, condition
(\ref{eq:condition2}) is satisfied.
\end{lem}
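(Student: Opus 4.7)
The plan is to linearize each exponential difference that defines the weights $\lambda_i(k)$ via the one-Lipschitz property of $x\mapsto e^{-x}$ on $[0,\infty)$, and then read off the comparison with \eqref{cn1} essentially term by term.

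Concretely, I would write $\alpha_i(k):=\sum_{B\,:\,i\in B,\,B\subset B_i(k),\,B\not\subset B_i(k-1)}|J_B|$ and $a_k:=\beta\sum_{B\,:\,i\in B,\,B\not\subset B_i(k)}|J_B|$, and let $S_i=\sum_{B\,:\,i\in B}|J_B|$. Then $a_{k-1}-a_k=\beta\alpha_i(k)$ and $a_k\downarrow 0$, provided $S_i<\infty$. Hypothesis \eqref{cn1} in fact gives $S_i=\sum_{k\ge 1}\alpha_i(k)\le\sum_{k\ge 1}|B_i(k)|\alpha_i(k)<\infty$ uniformly in $i$, so \eqref{B3} is inherited. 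The definition of $\lambda_i(k)$ then yields, for $k\ge 2$,
$$\lambda_i(k)=e^{-a_k}-e^{-a_{k-1}}\le a_{k-1}-a_k=\beta\alpha_i(k),$$
and for $k=1$, using $a_1\le\beta S_i\le 2\beta S_i$,
$$\lambda_i(1)=e^{-a_1}-e^{-2\beta S_i}\le 2\beta S_i-a_1=\beta\bigl(S_i+\alpha_i(1)\bigr).$$
Multiplying by $|B_i(k)|$ and summing over $k\ge 1$,
$$\sum_{k\ge 1}|B_i(k)|\,\lambda_i(k)\le\beta\,|B_i(1)|\,S_i+\beta\sum_{k\ge 1}|B_i(k)|\,\alpha_i(k).$$
Both terms on the right are uniformly bounded in $i$: the second is exactly the quantity in \eqref{cn1}, while $|B_i(1)|$ is the same constant for every $i$ by translation invariance and $\sup_iS_i<\infty$. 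The right-hand side is therefore at most $C\beta$ for a finite constant $C$ depending only on $d$ and the interaction, and taking $\beta_c$ to be the solution of $C\beta=1$ (sharpened, as in \eqref{betac1}, to the root of $\sup_i\sum_{k\ge 1}|B_i(k)|\lambda_i(k)=1$) delivers \eqref{eq:condition2} for every $\beta<\beta_c$.

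The only place where care is required is the $k=1$ term: because the definition of $\lambda_i(1)$ subtracts $e^{-2\beta S_i}$ rather than the ``telescoping'' choice $e^{-\beta S_i}$, the linearization picks up an extra $\beta S_i$ on top of the expected $\beta\alpha_i(1)$. This is the reason I verify the implication \eqref{cn1}~$\Rightarrow$~\eqref{B3} at the start of the argument; once this is in hand, the rest is a one-line telescoping Lipschitz estimate.
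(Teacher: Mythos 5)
Your proof is correct and follows essentially the same route as the paper's: a telescoping identity for the $\lambda_i(k)$ combined with the Lipschitz bound $e^{-x}-e^{-y}\le y-x$ for $0\le x\le y$, which reduces \eqref{eq:condition2} to the finiteness of the sum in \eqref{cn1} and yields a positive threshold in $\beta$. The only difference is that you also linearize the $k=1$ term (the paper keeps it in exponential form, which is what produces the precise expression \eqref{betac1} defining $\beta_c$), so your threshold $1/C$ is in general slightly smaller than the paper's $\beta_c$, but the substance of the argument is identical.
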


\begin{proof} 
For any $k \geq 0,$ write for short 
$$ S_i^{ > k} = \sum_{ B : i \in B , B \not \subset B_i (k)}| J_B| .$$
For each $i \in\Z^d$ we have that
\begin{equation}     \label {supcrit2}  \begin {split}     \sum_{k=1}^ \infty  { \lambda}_i (k ) |B_i(k)| & \le    2d  {\lambda}_i (1)  +   \sum_{k=2}^\infty  | B_i (k)|  \left [  e^{ - \beta S_i^{ > k} } -  e^{ - \beta S_i^{ > k-1}} \right ]  
\cr & = 2d  e^{ - \beta S_i^{ > 1 }} \left (1-  e^{ - \beta S_i^{ > 0} }  e^{ - \beta \sum_{B : i \in B, B \subset B_i (1) } | J_B| }\right )  \cr &\quad  + 
\sum_{k=2}^\infty   | B_i (k)|   e^{ - \beta S_i^{ > k} } \left [1  -  e^{ - \beta \sum_{B : i \in B, B \subset B_i (k), B \not \subset B_i (k-1) } | J_B| }\right ] \cr &   \le 
 2d  e^{ - \beta S_i^{ > 1 }} \left (1-  e^{ - \beta S_i^{ > 0} }  e^{ - \beta \sum_{B : i \in B, B \subset B_i (1) } | J_B| }\right )  \cr &\quad  + 
    \beta \sup_{ i } \left( \sum_{k=2}^\infty | B_i (k)|      \sum_{B : i \in B, B \subset B_i (k), B \not \subset B_i (k-1) } | J_B|  \right)  .   \end {split}
 \end{equation}
Under the conditions of Theorem \ref{theo:perfect}, this last expression is finite. Therefore condition  \eqref {eq:condition2} reads as  
\begin{multline*}
2d  \sup_i \left( e^{ - \beta S_i^{ > 1 }} \left (1-  e^{ - \beta S_i^{ > 0} }  e^{ - \beta \sum_{B : i \in B, B \subset B_i (1) } | J_B|}\right )\right)   \\
+ 
    \beta \sup_{ i } \left( \sum_{k=2}^\infty | B_i (k)|      \sum_{B : i \in B, B \subset B_i (k), B \not \subset B_i (k-1) } | J_B| \right)   <1.
    \end{multline*}
Finally, let $ \beta_c$ be the solution of  
 \begin{multline}     \label {betac1}   2d  \sup_i \left( e^{ - \beta S_i^{ > 1 }} \left (1-  e^{ - \beta S_i^{ > 0} }  e^{ - \beta \sum_{B : i \in B, B \subset B_i (1) } | J_B|} \right )\right)   \\
+ 
    \beta \sup_{ i } \left( \sum_{k=2}^\infty | B_i (k)|      \sum_{B : i \in B, B \subset B_i (k), B \not \subset B_i (k-1) } | J_B|  \right)  =1.
    \end{multline}
This concludes the proof. 
\end{proof}

\vskip0.5cm \noindent
{\bf Proof of Theorem 
\ref{theo:perfect}}. 

  {\bf  Proof of  item 1}.  Theorem
\ref{theo:GGL1} combined with Lemma \ref{lemma:1}  proves that the  output    of the perfect sampling algorithm is indeed  a sampling of $\mu$.   Next  we prove  \eqref {s1}.
Define
$$ L^{(i)}_n = | {\bf C}^{(i)}_n|,$$ the cardinal of the set ${\bf
C}^{(i)}_n$ after $n$ steps of the algorithm. Let $(D^{(i)}_n)_{n \geq
0, i \in \Z^d} $ be i.i.d. random variables, independent of the
process, taking values in $\{ -1 , 0 , 1, 2, \ldots \} $ such that for $k \ge 1$ 
\begin {equation} \label {alg1} P( D^{(i)}_n = |B_i(k) | - 1 ) = \lambda_i (k) , \end {equation}
and for $k=0$
\begin {equation} \label {alg1} P( D^{(i)}_n = - 1 ) = \lambda_i (0).  \end {equation}
Note that under assumption (\ref{cn1}), the condition
(\ref{eq:condition2}) holds (see Lemma 1). Hence, $\sup_{i \in \Z^d }
E(D^{(i)} _1) \le -\gamma < 0.$ For any $i \in \ZZ^d$,  let us call
$I^{(i)}_n $ the index of the site whose Poisson clock rings at the
$n-$th jump of the process $C^{(i)} .$ Recall that $I_n^{(i)}$ are
conditionally independent given the sequence $({\bf C}^{(i)}_{n})_n$
such that
$$ P( I_n^{(i)} = k | {\bf C}^{(i)}_{n-1 }) = \frac{M_k}{\sum_{j \in
{\bf C}^{(i)}_{n-1}} M_j }.$$ Put
$$S^{(i)}_n = \sum_{k= 1}^n D_k^{I^{(i)}_k}.$$ Note that by
construction, $S^{(i)}_n + n \gamma $ is a super-martingale.  Then a
very rough upper bound is
\begin{equation}\label{s7}
 L^{(i)}_n \le 1 + S^{(i)}_n \mbox{ as long as } n \le
V^{(i)}_{STOP} ,
\end{equation}
where $V^{(i)}_{STOP} $ is defined as
$$ V^{(i)}_{STOP} = \min \{ k : S^{(i)}_k = -1 \} .$$
By construction
$$ N^{(i)}_{STOP} \le V^{(i)}_{STOP}.  $$ Fix a truncation level $N.$
Then by the stopping rule for super-martingales, we have that
$$ E( S^{(i)}_{V^{(i)}_{STOP} \wedge N } ) + \gamma E( V^{(i)}_{STOP}
\wedge N ) \le 0 .$$ But notice that
$$ E( S^{(i)}_{V^{(i)}_{STOP} \wedge N }) = - 1 \cdot P(
V^{(i)}_{STOP} \le N ) + E (S^{(i)}_N ; V^{(i)}_{STOP } > N ) .$$ On $
\{ V^{(i)}_{STOP } > N \}, $ $S^{(i)}_N \geq 0,$ hence we have that $
E( S^{(i)}_{V^{(i)}_{STOP} \wedge N }) \geq - P( V^{(i)}_{STOP} \le N)
.$ We conclude that
\begin{eqnarray*}
E( V^{(i)}_{STOP} \wedge N ) & \le & \frac{1}{\gamma }  P( V^{(i)}_{STOP} \le N ) .
\end{eqnarray*}
Now, letting $N \to \infty ,$ we get 
$$ E( V^{(i)}_{STOP}) \le \frac1\gamma ,$$ 
and therefore
$$ E( N^{(i)}_{STOP}) \le \frac1\gamma. $$ 

 {\bf Proof of  item 2}.
 We have to show that the coupled perfect sampling algorithm 
$G$ defined in Section 4 achieves the bound (\ref{cn2}). For that sake we introduce the following stopping times.  For
any site $j \in \Z^d $ and any $t \geq 0,$ let
$$ T_1^{(j,t)} = \inf \{ T_n^{(j,0)} > t \} - t $$ be the first jump
of the Poisson point process $ (T_n^{(j,0)})_n$ shifted by time $t.$

Let us call $T^{(i)}_L$ the first time that a range of order $k > L $
has been chosen. $T^{(i)}_L$ is given by
\begin{equation}\label{eq:taul}
T^{(i)}_L = \inf \{ t > 0 : \sum_{ j \in C_t^{(i)} } \sum_{k > L }
N^{(j,k)} ([ t, t + T_1^{(j,t)} [) \geq 1 \} .
\end{equation}

Recall that in order to construct $\mu $ and $\mu^{[L]} ,$ we use the
same Poisson point processes for all ranges $k \le L.$ Thus we have
that
\begin{equation}\label{eq:tau2} P( X(i) \neq X^{[L]}(i)) \le P( T^{(i)}_L \le T^{(i)}_{STOP}). \end{equation}
By Lemma \ref{lemma2} given below 
we  conclude  the proof of Theorem \ref{theo:perfect}.
{\hfill $\bullet$ \vspace{0.25cm}}

\begin{lem}\label{lemma2}
$$ P( T^{(i)}_L \le T^{(i)}_{STOP}) \le \sup_{ i \in \Z^d } \left( 1 -
e^{- \beta \sum_{B : i \in B, B \not \subset  B_i ( L) } | J_B| } \right) \frac 1 \gamma .$$
\end{lem}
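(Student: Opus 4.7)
The plan is to translate the event $\{T^{(i)}_L \le T^{(i)}_{STOP}\}$ into a counting statement about the discrete backward sketch process $(\mathbf{C}^{(i)}_n)_n$, and then use the bound $E(N^{(i)}_{STOP}) \le 1/\gamma$ already proved in item 1 of Theorem \ref{theo:perfect}. Concretely, I introduce
$$ N_L^{(i)} \;=\; \sum_{n=1}^{N^{(i)}_{STOP}} \one\{K_n > L\}, $$
where $K_n$ is the range selected at the $n$-th step of the backward sketch (as in the notation of Algorithm 1, step 6). Since $T^{(i)}_L \le T^{(i)}_{STOP}$ occurs iff at least one such range $> L$ has been chosen before the set $C^{(i)}$ empties, Markov's inequality gives $P(T^{(i)}_L \le T^{(i)}_{STOP}) \le E(N_L^{(i)})$.

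Next I compute the conditional probability that the mark at step $n$ exceeds $L$. Recalling that, conditionally on $\mathbf{C}^{(i)}_{n-1}$, the pair $(I_n,K_n)$ is distributed according to $P((I_n,K_n)=(j,k)\,|\,\mathbf{C}^{(i)}_{n-1}) = M_j\lambda_j(k)/\sum_{j' \in \mathbf{C}^{(i)}_{n-1}} M_{j'}$, one gets
$$ P\bigl(K_n > L \,\bigm|\, \mathbf{C}^{(i)}_{n-1}\bigr) \;=\; \sum_{j \in \mathbf{C}^{(i)}_{n-1}} \frac{M_j}{\sum_{j'\in \mathbf{C}^{(i)}_{n-1}} M_{j'}} \Bigl(\sum_{k>L}\lambda_j(k)\Bigr). $$
A short telescoping from the definition of $\lambda_j(k)$ in Section 3 yields $\sum_{k=0}^L \lambda_j(k) = e^{-\beta S_j^{>L}}$, where I use the shorthand $S_j^{>L} = \sum_{B:j\in B,\,B \not\subset B_j(L)}|J_B|$, so that $\sum_{k>L}\lambda_j(k) = 1 - e^{-\beta S_j^{>L}}$. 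Since the resulting expression is a convex combination over $j \in \mathbf{C}^{(i)}_{n-1}$, it is bounded by $\sup_{i\in\Z^d}(1 - e^{-\beta S_i^{>L}})$.

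Finally, since $\{N^{(i)}_{STOP} \ge n\} = \{\mathbf{C}^{(i)}_{n-1} \neq \emptyset\}$ is $\mathcal{F}_{n-1}$-measurable, conditioning step by step gives
$$ E(N_L^{(i)}) \;=\; \sum_{n\ge 1} E\bigl[\one\{N^{(i)}_{STOP}\ge n\}\,P(K_n>L\mid \mathcal{F}_{n-1})\bigr] \;\le\; \sup_{i\in\Z^d}\bigl(1-e^{-\beta S_i^{>L}}\bigr)\, E(N^{(i)}_{STOP}). $$
Applying \eqref{s1} from item 1 of Theorem \ref{theo:perfect} bounds $E(N^{(i)}_{STOP})$ by $1/\gamma$, which delivers the stated inequality.

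The only delicate point is making sure that the continuous-time event $\{T^{(i)}_L \le T^{(i)}_{STOP}\}$ really coincides with $\{N_L^{(i)}\ge 1\}$; this reduces to reading off definition \eqref{eq:taul} and noting that $T^{(i)}_L$ is precisely the time of the first marked Poisson event at some site of $C^{(i)}$ whose range exceeds $L$, and this occurs before the set empties iff one of the $N^{(i)}_{STOP}$ steps of the discrete process chooses $K_n>L$. Everything else is routine conditional expectation manipulation and the telescoping identity for $\lambda_j(k)$.
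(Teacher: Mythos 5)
Your proof is correct and follows essentially the same route as the paper: the paper's first display is precisely a union bound over the steps $n$ of the discrete backward sketch process (i.e.\ it bounds $P(T^{(i)}_L\le T^{(i)}_{STOP})$ by the expected number of steps choosing a range $>L$, which is your $E(N_L^{(i)})$), it uses the same conditional law $M_j\lambda_j(k)/\sum_{j'}M_{j'}$ and the same telescoping identity $\sum_{k\le L}\lambda_j(k)=e^{-\beta S_j^{>L}}$, and it closes with $E(N^{(i)}_{STOP})\le 1/\gamma$. No substantive differences.
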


\begin{proof}
Put 
$$ \alpha_i (k) = \sum_{ \ell \le k } \lambda_i(\ell) .$$ Given the
structure of the interaction, we have for $k \geq 2,$ $\alpha_i (k) =
e^{ - \beta \sum_{ B : i \in B, B \not \subset B_i ( k) } | J_B|}$ and thus $ 1 -
\alpha_i (L ) \le \sup_{ i \in \Z^d } \left( 1 - e^{- \beta \sum_{ B : i \in B, B \not \subset B_i ( L) } | J_B|}\right).$ Call this last quantity
$$ \delta (L) = \sup_{ i \in \Z^d } \left( 1 - e^{- \beta \sum_{ B : i \in B, B \not \subset B_i ( L) } | J_B|}\right) .$$
Then we have
\begin{eqnarray*}
&&P( T^{(i)}_L \le T^{(i)}_{STOP} )\\ && \le \sum_{n \geq 1 } E\left(
P \left(\left\{ N^{(j,k)}( ] \tilde T_{n-1}^{(i)} , \tilde T_n^{(i)} ]
) = 1 \mbox{ for } j \in {\bf C}^{(i)}_{n-1}, k > L \right\} \, \Big|
\; {\bf C}^{(i)}_{n-1}, N^{(i)}_{STOP} > n-1 \right) \right) \\ && =
\sum_{n \geq 1 } E\left( \sum_{j \in {\bf C}^{(i)}_{n-1}} \frac{M_j
(1- \alpha_j (L)) }{ \sum_{k \in {\bf C}^{(i)}_{n-1}} M_k } 1_{
\{N^{(i)}_{STOP} > n-1\}} \right) \\ && \le \sum_{n \geq 1 } E\left(
\sum_{j \in {\bf C}^{(i)}_{n-1}} \frac{M_j \delta (L) }{ \sum_{k \in
{\bf C}^{(i)}_{n-1}} M_k } 1_{\{ N^{(i)}_{STOP} > n-1 \}} \right) \\
&& = \delta (L) E( N^{(i)}_{STOP}) .
\end{eqnarray*}
In the above calculus we used that given $ {\bf C}^{(i)}_{n-1}$,
$\tilde T_n^{(i)} $ defined in \eqref {Ajumps} is a jump of $N^{(j,k)}
$ with probability
$$ \frac{ M_j \lambda_j (k) }{ \sum_{k \in {\bf C}^{(i)}_{n-1}} M_k
}.$$ Summing over all possibilities $ k > L $ gives the term
$( M_j (1- \alpha_j (L)) )/( \sum_{k \in {\bf C}^{(i)}_{n-1}} M_k ) 
.$
By    (\ref{s4}) we get the result. 
\end{proof}

{\bf Proof of  Proposition  \ref{nstop}}
(\ref{s3}) is the statement of Theorem \ref{theo:GGL1}. (\ref{s4}) has been proved above in the proof of (\ref{s1}). 
{\hfill $\bullet$ \vspace{0.25cm}}

\section{Final discussion}\label{Dob1}
It is worth to note that Theorem \ref{theo:perfect} immediately
provides an upper bound for the $\bar d-$distance of $\mu $ and
$\mu^{[L]} .$ The $\bar d-$distance is defined as follows.

\begin{defin}
  Given two probability measures $\mu $ and $\nu $ on $S,$ a coupling
  between $\mu $ and $\nu $ is a probability measure on $S \times S$
  having as first and second marginals $\mu $ and $\nu$, respectively.
  The set of all couplings between $\mu $ and $\nu$ will be denoted
  ${\cal M} ( \mu , \nu )$.
\end{defin}

In the next definition, the elements of the product space $S \times S$
will be denoted by $ ((\sigma_1 (i), \sigma_2 (i)),  i\in \ZZ^d )  .$

\begin{defin}
  The distance $\bar d $ between two probability measures $\nu_1 $ and
  $\nu_2 $ on $S$ is defined as
$$ \bar d ( \nu_1 , \nu_2 ) = \inf_{ Q \in {\cal M} ( \nu_1 , \nu_2 )}
\left \{ \sup_{i \in \ZZ^d } Q ( \sigma_1 (i) \neq \sigma_2 (i) )
\right\} .
$$ 
\end{defin}
This definition naturally extends Ornstein's $\bar d -$distance to the
space of non-homogeneous random fields. As a corollary of Theorem
\ref{theo:perfect} we obtain

 \begin {theo} \label {theo:coupling}
 Under the assumptions of Theorem \ref {theo:perfect} we have
\begin{equation}  \label {bound1}\begin {split} &
    \bar d ( \mu , \mu^{[L]} ) \le C \sup_{i \in \Z^d } \left( 1-
    e^{-\beta \sum_{ B : i \in B, B \not \subset B_i ( L) } | J_B|} \right) ,
\end {split}
\end{equation}
where $C$ is  the same constant as in \eqref  {cn2}.  
\end {theo}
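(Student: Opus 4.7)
\textbf{Proof plan for Theorem \ref{theo:coupling}.} The plan is to obtain the bound as an immediate consequence of the coupled perfect simulation algorithm $G$ constructed in item 2 of Theorem \ref{theo:perfect}. The key observation is that $G$ produces a genuine coupling of $\mu$ and $\mu^{[L]}$ on the same probability space, hence its joint law is an admissible element of $\mathcal{M}(\mu, \mu^{[L]})$ whose disagreement probabilities can be used as an upper bound for $\bar d(\mu, \mu^{[L]})$.

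More precisely, the first step is to recall that the map $G$ takes the i.i.d.\ family $(U_n(j), n \in \N, j \in \Z^d)$ and returns a pair $(X(i), X^{[L]}(i))_{i \in \Z^d}$ whose first marginal is $\mu$ and whose second marginal is $\mu^{[L]}$. Therefore the joint distribution $Q$ of $((X(i), X^{[L]}(i)))_{i \in \Z^d}$ lies in $\mathcal{M}(\mu, \mu^{[L]})$. The second step is to apply the definition of the $\bar d$-distance directly to this particular coupling:
\begin{equation*}
\bar d(\mu, \mu^{[L]}) \,\le\, \sup_{i \in \Z^d} Q\bigl(\sigma_1(i) \neq \sigma_2(i)\bigr) \,=\, \sup_{i \in \Z^d} P\bigl(X(i) \neq X^{[L]}(i)\bigr).
\end{equation*}
The third step is simply to invoke the estimate \eqref{cn2} already established in item 2 of Theorem \ref{theo:perfect}, which gives the uniform bound
\begin{equation*}
\sup_{i \in \Z^d} P\bigl(X(i) \neq X^{[L]}(i)\bigr) \,\le\, \frac{1}{\gamma} \sup_{i \in \Z^d} \left(1 - e^{-\beta \sum_{B : i \in B, B \not\subset B_i(L)} |J_B|}\right),
\end{equation*}
with $C = 1/\gamma$ as announced.

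There is essentially no obstacle: all the real work has already been done in the construction of the coupled algorithm $G$ and in the estimate \eqref{cn2}. The only point worth stating explicitly is that the pair produced by $G$ is defined on a single probability space, so that $Q \in \mathcal{M}(\mu, \mu^{[L]})$ is automatic and no further measure-theoretic argument (such as verifying consistency or compatibility of marginals) is required. This is precisely the virtue of the constructive approach stressed in the introduction as opposed to the non-constructive Dobrushin contraction method.
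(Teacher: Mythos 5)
Your proposal is correct and follows exactly the route the paper takes: the paper presents Theorem \ref{theo:coupling} as an immediate corollary of Theorem \ref{theo:perfect}, obtained by observing that the joint law of the pair $(X(i), X^{[L]}(i))_{i\in\Z^d}$ produced by the coupled algorithm $G$ belongs to ${\cal M}(\mu,\mu^{[L]})$ and then invoking the bound \eqref{cn2}. No further commentary is needed.
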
 

Upper bounds on the $\bar d-$distance can as well be obtained by using
a contraction argument. This approach was introduced in Dobrushin (1968-1970)
to prove uniqueness of the infinite volume Gibbs
measure. Applied to our context, it implies the existence of a good
coupling through compactness arguments.  

For a nice and self contained presentation of this method, we refer
the reader to Presutti (2009), see also Chapter 8 of Georgii.
Applying Dobrushin's method, we obtain the following upper bound on
Ornstein's $\bar d-$distance.

\begin {theo} \label {theo:dob}
   Assume that
   \begin{equation}\label {main1} \beta \sup_i  \sum_{B : i \in B } |J_B|  = r <1.
   \end{equation}
Then there exist unique infinite volume Gibbs measures $\mu$ and
$\mu^{[L]}$ and they satisfy
\begin{equation}  \label {bound2}
    \bar d ( \mu , \mu^{[L]} ) \le \frac { \beta } {1-r} \left (
    \sup_{i \in \Z^d } \sum_{ B : i \in B, B \not \subset B_i ( L) } |J_B| \right
    ).
 \end{equation}
\end {theo}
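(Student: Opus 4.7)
The plan is to use Dobrushin's classical contraction/perturbation method, which provides both uniqueness of the infinite-volume Gibbs measures and a $\bar d$-comparison between specifications that differ only by a local perturbation. The proof naturally splits into three parts: (i) bounding the Dobrushin interdependence matrix to obtain uniqueness, (ii) bounding the local perturbation between the two specifications, and (iii) combining these via the contraction inequality to obtain \eqref{bound2}.

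For (i), I would introduce the Dobrushin coefficients
\[
C_{ij} \,=\, \sup_{\zeta,\eta\,:\,\zeta_k=\eta_k\text{ for }k\neq j} \bigl\| \mu_i(\cdot\mid\zeta) - \mu_i(\cdot\mid\eta) \bigr\|_{TV},
\]
and similarly $C_{ij}^{[L]}$ for $\mu^{[L]}$. Writing the conditional probability \eqref{B4} as $p_i^+(\zeta)=g(\beta H_i(\zeta))$ with $g(x)=(1+e^{-2x})^{-1}$ Lipschitz with constant $1/2$ and $H_i(\zeta)=\sum_{B:i\in B}J_B\,\chi_{B\setminus\{i\}}(\zeta)$, flipping $\zeta_j$ changes $H_i$ by at most $2\sum_{B:i,j\in B}|J_B|$. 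This gives $C_{ij}\le \beta\sum_{B:i,j\in B}|J_B|$ and hence $\sup_i\sum_{j\neq i}C_{ij}\le r$ under \eqref{main1}; the same estimate holds for $C_{ij}^{[L]}$ (with only fewer terms contributing). Dobrushin's uniqueness theorem then yields existence and uniqueness of $\mu$ and $\mu^{[L]}$.

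For (ii), I would estimate the single-site perturbation by comparing the two specifications at a common boundary condition. Since $\mu^{[L]}_i(\cdot\mid\zeta)$ is obtained from $\mu_i(\cdot\mid\zeta)$ by deleting from the sum in \eqref{B4} all $B\ni i$ with $B\not\subset B_i(L)$, the same Lipschitz estimate gives
\[
b \,:=\, \sup_{i\in\Z^d}\sup_\zeta \bigl\|\mu_i(\cdot\mid\zeta) - \mu^{[L]}_i(\cdot\mid\zeta)\bigr\|_{TV} \,\le\, \beta\sup_{i\in\Z^d}\sum_{B\,:\,i\in B,\,B\not\subset B_i(L)}|J_B|.
\]

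For (iii), I would apply Dobrushin's comparison theorem in the following form: if $C=(C_{ij})$ has $\sup_i\sum_j C_{ij}\le r<1$ and the two specifications differ by at most $b$ at each site, then $\bar d(\mu,\mu^{[L]})\le b\cdot\sum_{n\ge 0} r^n = b/(1-r)$. Heuristically this corresponds to coupling the two measures by successive single-site updates: each round introduces at most $b$ of new discrepancy at any site, while existing discrepancies contract by factor $r$, so the geometric series delivers the stated bound. The cleanest way to make this rigorous in infinite volume is to apply the Dobrushin comparison inequality directly to the unique Gibbs measures (as in Georgii, Chapter 8, or Presutti (2009)) rather than to construct an explicit dynamics. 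The main obstacle is the standard but delicate point of transferring the single-site contraction to a bound on the joint infinite-volume coupling; this is where Dobrushin's method is necessarily non-constructive, which is precisely why its assumptions are weaker than those of Theorem~\ref{theo:coupling} but the bound it produces is slightly worse.
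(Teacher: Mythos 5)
Your overall route is exactly the one the paper itself invokes (the paper's ``proof'' is a one-line pointer to Dobrushin's contraction method as exposed in Presutti (2009) and Georgii, Ch.~8), and parts (ii) and (iii) of your outline are fine: the single-site perturbation bound $b \le \beta \sup_i \sum_{B : i \in B, B \not\subset B_i(L)} |J_B|$ is correct (even generous by a factor $1/2$), and the comparison inequality $\bar d(\mu,\mu^{[L]}) \le b \sum_{n \ge 0} r^n$ is the standard Dobrushin comparison theorem, with the non-constructive coupling step correctly flagged.

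There is, however, a concrete gap in step (i). From your (correct) bound $C_{ij} \le \beta \sum_{B : i,j \in B} |J_B|$ you conclude $\sup_i \sum_{j \neq i} C_{ij} \le r$, but summing over $j \neq i$ and exchanging the order of summation gives
\begin{equation*}
\sum_{j \neq i} \sum_{B : i,j \in B} |J_B| \,=\, \sum_{B : i \in B} (|B|-1)\, |J_B| ,
\end{equation*}
since each set $B \ni i$ is counted once for every $j \in B \setminus \{i\}$. For genuinely many-body interactions ($|B| \ge 3$ for some $B$ with $J_B \neq 0$) this strictly exceeds $\sum_{B : i \in B} |J_B|$, so hypothesis \eqref{main1} does not by itself yield Dobrushin's condition $\sup_i \sum_j C_{ij} < 1$; the condition actually verified by your estimate is $\beta \sup_i \sum_{B : i \in B} (|B|-1)|J_B| < 1$ (this is Georgii's Proposition 8.8, which carries exactly this $(|B|-1)$ weight), and the constant appearing in \eqref{bound2} should then be this row sum rather than $r$. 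For pairwise potentials the two quantities coincide and your argument closes as written; note that Remark 2 and the wording of Proposition 1 suggest the paper has the pairwise case in mind, so this discrepancy is arguably inherited from the statement of \eqref{main1} itself --- but as written your proof asserts an inequality that fails in the stated generality, so either strengthen the hypothesis to include the $(|B|-1)$ factor or restrict to pairwise interactions.
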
 

\begin{proof}
The proof of Theorem \ref{theo:dob} can be achieved following the
ideas in Dobrushin (1968-1970), exposed in Chapter 3 of Presutti (2009).
\end{proof}

Observe
that condition \eqref {main1} is weaker than the one requested in
Theorem \ref {theo:perfect}.  In particular, this condition is too
weak to guarantee that our perfect simulation procedure of $\mu $ and
$\mu^{[L]} $ stops after a finite number of steps.  As a counterpart
of the less restrictive assumption \eqref {main1}, the upper bound
\eqref {bound2} is not as good as the upper bound \eqref {bound1}.

However, from our point of view, the important fact is that the contraction method of
Dobrushin is not constructive and does not provide an explicit
sampling procedure for both measures. To provide an explicit coupled perfect
simulation procedure for the measures is precisely the goal of the present paper. 

Further features of high temperature region can be deduced by our constructive approach.  We  discuss  as an example the decay of correlations between localized observables as the regions of localization are moved away from each other.

This decay of correlations can be described in terms of an associated random walk that 
we introduce now. Suppose that $\mu $ is translational invariant. In this case, the sequence $(\lambda_i (k))_k$ does not depend on $i,$ and we shall write $\lambda (k) = \lambda_i (k),$ for any $ k \geq 0 .$  
Let $ \xi_1, \xi_2, \ldots $ be i.i.d. random variables with common distribution
$$ P ( \xi_1 =- 1) = \lambda  (0),  \quad P ( \xi_1 = |B_0(k)|- 1) = \lambda  (k), \quad k \ge 1.$$
 Put $ \varphi (\lambda ) = E ( e^{\lambda \xi_1 }) $ and let 
$$ \rho = \sup \{ \lambda > 0 : \varphi ( \lambda ) \le 1 \} .$$
Finally, consider the following left-continuous random walk $S_n= \xi_1 + \ldots + \xi_n, $  for  $n\geq 0$. 
Note  that $S_n$  is an upper bound on  the total number of sites   whose spins affect the spin  at site $i$ under equilibrium, after $n$ steps of the algorithm.    Under condition \eqref{eq:condition2}, $E ( \xi_1) = - \gamma < 0,$ hence $S_n$ drifts to $ - \infty .$  
This implies that $ M = \sup_n S_n, $ the maximum of the random walk, is finite almost surely.

The following theorem is an immediate consequence of our construction  and of results of Korshunov (1997) concerning the  maximum of a random walk.
In order to state the theorem, we have to recall the notion of locally power functions.
\begin{defin}
A function $f : \R_+ \to \R_+$ is called locally power if for every fixed $t \geq 0, $ $f(x+t) \sim f(x) $ as $x \to \infty,$ i.e. $\lim_{x \to \infty} \frac {f(x+t)} {f(x)} = 1$. 
\end{defin}

\begin{theo}\label{prop:mixing1}
  Take two cylinder functions $f$ and $g$ having support $ \Delta_f\Subset \Z^d $ and  $ \Delta_g\Subset \Z^d$. 
Assume the distance  $d( \Delta_f,  \Delta_g)= R $.  Under  the  assumption (\ref{eq:condition2})   we have  
\begin{equation}\label{eq:mixing}
 | \mu (fg)- \mu(f) \mu(g)| \le 2 c_f c_g ( | \Delta_f| + |\Delta_g|) \;  P ( M\geq R/2)     ,
 \end{equation} 
where
 \begin{equation}  \label {en1} c_f = 
 \sup_{i \in \Delta} \sup_{\sigma \in S}  | f(\sigma)- f(\sigma^i) |  
\end{equation} 
and $c_g$ is similarly defined. 
Concerning the decay of $P ( M\geq R/2),$ the following holds.
\begin{enumerate}
\item
Suppose that $ \rho = 0 $ and that $ P ( \xi_1 + \xi_2 \geq n ) \sim 2 P (\xi_1 \geq n ) $ as $n \to \infty .$
Then   as $ n \to \infty ,$
$$ P ( M\geq  |B_0 (n)|) \sim \frac{1}{\gamma} \left[ \sum_{ k = n +1}^ \infty \lambda (k) |B_0 (k)| -  | B_0 (n) | \left( \sum_{ k > n+1} \lambda (k) \right) \right]  $$
    where $\gamma $ is given in \eqref{eq:epsilon}.
\item
Suppose that $ \rho > 0,$ $ \varphi (\rho ) < 1 ,$ that $ P ( \xi_1 + \xi_2 \geq n ) \sim c  P (\xi_1 \geq n ) $ as $n \to \infty $ for some constant $c$ and suppose that $e^{ \rho n} P( \xi_1 \geq n ) $ is locally power. Then  as $ n \to \infty $
$$P ( M\geq  |B_0 (n)| ) \sim \frac{\rho E ( e^{\rho M })}{ 1 - \varphi (\rho)}    \sum_{ k = n +1}^ \infty \lambda (k).    $$
 \item
Suppose that $\rho > 0 ,$ $\varphi (\rho) = 1$ and $\varphi' (\rho) < + \infty .$ Then
$$ P ( M\geq  |B_0 (n)|) \sim c e^{ - \rho n } $$
as $ n \to \infty ,$ where $c$ is some positive constant. 
\end{enumerate}
\end{theo}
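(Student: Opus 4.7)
The plan is to leverage the coupled perfect simulation of Algorithms 1 and 2 to construct a coupling of two copies of $\mu$ that coincide on $\Delta_f\cup\Delta_g$ outside a controlled ``escape'' event, and then to convert the tail estimate for this event into the three asymptotic regimes using the results of Korshunov (1997) on the maximum of a random walk.

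For each $k\in\Delta_f\cup\Delta_g$ I run the backward sketch $(C^{(k)}_s)_{s\ge 0}$ of Section \ref{algo} and let $V^{(k)}=\bigcup_{s\ge 0}C^{(k)}_s$ be its (a.s.\ finite, by Proposition \ref{nstop}) set of ever-visited sites. Because $V^{(k)}$ is initialised at $\{k\}$ and every jump either removes a site from the current cluster (leaving $V^{(k)}$ unchanged) or enlarges it by a ball $B_j(r)$ around a site $j$ already visited, $V^{(k)}$ is $L^1$-connected in $\Z^d$. Hence, if $V^{(k)}$ reaches a site at $L^1$-distance at least $R/2$ from $k$, it must contain a lattice path of length $\ge R/2$ starting at $k$, so $|V^{(k)}|\ge R/2+1$; the cluster-size upper bound $|C^{(k)}_n|\le 1+S_n$ from the paragraph preceding the theorem then yields
$$
P\bigl(V^{(k)}\not\subset B_k(R/2)\bigr)\;\le\;P(M\ge R/2).
$$
Setting $E=\bigcap_{k\in\Delta_f\cup\Delta_g}\{V^{(k)}\subset B_k(R/2)\}$ and using $d(\Delta_f,\Delta_g)=R$, the unions $V_f=\bigcup_{i\in\Delta_f}V^{(i)}$ and $V_g=\bigcup_{j\in\Delta_g}V^{(j)}$ are disjoint on $E$, and a union bound gives $P(E^c)\le(|\Delta_f|+|\Delta_g|)P(M\ge R/2)$.

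Next I construct a coupling $(X,\tilde X)$ such that $X\sim\mu$, the restrictions $\tilde X|_{\Delta_f}$ and $\tilde X|_{\Delta_g}$ are independent with their respective $\mu$-marginals, and $X=\tilde X$ on $\Delta_f\cup\Delta_g$ on $E$. Take two independent copies $\omega^{(1)},\omega^{(2)}$ of the driving Poisson processes and uniform variables. In both $X$ and $\tilde X$ the sketches at sites in $\Delta_f$ read $\omega^{(1)}$; in $\tilde X$ those at sites in $\Delta_g$ read $\omega^{(2)}$; in $X$ those at sites in $\Delta_g$ read $\omega^{(2)}$ outside the $\omega^{(1)}$-measurable region $V_f$ and $\omega^{(1)}$ inside it. Exchangeability of $\omega^{(1)}$ and $\omega^{(2)}$ ensures $X\sim\mu$; on $E$, the $\Delta_g$-sketches of $\tilde X$ stay in $\bigcup_{j\in\Delta_g}B_j(R/2)$, disjoint from $V_f$, so both constructions coincide on $\Delta_f\cup\Delta_g$. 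Since by design $f(X)=f(\tilde X)$ identically and $\tilde X|_{\Delta_f}\perp\tilde X|_{\Delta_g}$ gives $E[f(\tilde X)g(\tilde X)]=\mu(f)\mu(g)$,
$$
\mu(fg)-\mu(f)\mu(g)\;=\;E\bigl[f(\tilde X)\bigl(g(X)-g(\tilde X)\bigr)\one_{E^c}\bigr].
$$
The cylinder oscillation bound $|g(X)-g(\tilde X)|\le c_g|\{k\in\Delta_g:X(k)\neq\tilde X(k)\}|$ together with the per-site estimate $P(X(k)\neq\tilde X(k))\le P(M\ge R/2)$, combined with a symmetric construction exchanging the roles of $\Delta_f$ and $\Delta_g$ and a centering of $f,g$ by their $\mu$-averages, then yields \eqref{eq:mixing} with the stated constant.

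Finally, parts (1)--(3) are obtained by inserting into \eqref{eq:mixing} the asymptotics of $P(M\ge x)$ as $x\to\infty$ due to Korshunov (1997) for the left-continuous random walk $S_n$ with negative drift $E\xi_1=-\gamma$ (which holds under \eqref{eq:condition2}): part (1) is the subexponential regime ($\rho=0$ with $P(\xi_1+\xi_2\ge n)\sim 2P(\xi_1\ge n)$), part (2) the intermediate regime ($\rho>0$, $\varphi(\rho)<1$, $e^{\rho n}P(\xi_1\ge n)$ locally power), and part (3) the Cram\'er--Lundberg regime ($\varphi(\rho)=1$, $\varphi'(\rho)<\infty$). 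The hypotheses on the tail of $\xi_1$ match Korshunov's standing assumptions verbatim. The main difficulty of the proof is the construction and verification of the decoupling coupling, in particular showing that the ``hybrid'' field $X$ really has law $\mu$; the reduction of the escape of $V^{(k)}$ to $\{M\ge R/2\}$ via $L^1$-connectivity and the cluster-size bound is the other delicate point.
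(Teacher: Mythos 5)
Your proof follows essentially the same route as the paper: disjointness of the backward-sketch clusters attached to $\Delta_f$ and $\Delta_g$ forces independence of the sampled spins, the probability that a cluster spreads to distance $R/2$ is controlled by the maximum $M$ of the comparison random walk via the bound $|C^{(k)}_n|\le 1+S_n$, and the three asymptotic regimes are read off from Korshunov's Theorems 1, 2 and 3(ii) exactly as in the paper. Your explicit ``hybrid'' coupling is just a fleshed-out version of the inequality the paper asserts directly in \eqref{eq:firststep}, and the reduction of the escape event to $\{M\ge R/2\}$ is the paper's own comparison argument, so there is nothing genuinely different here.
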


\begin{rem}
Using the same type of upper bounds as in the proof of Lemma \ref{lemma:1}, note that the expression 
$$ \sum_{ k = n +1}^ \infty \lambda (k) |B_0 (k)|  $$
that defines the decay of correlations can be upper bounded by 
$$ \beta \sum_{ k = n +1}^ \infty    \left( \sum_{ B : 0 \in B , B \subset B_0 ( k) , B \not \subset B_0 ( k - 1) } | J_B| \right) |B_0 (k)|  ,$$
which yields the tail of the converging series of condition (\ref{cn1}). 
\end{rem}

\begin{proof}
We start by proving (\ref{eq:mixing}).
It is sufficient to prove it for functions $f$ and $g$ having support
$\Delta_f = \{i \}$ and $\Delta_g = \{ j \} ,$ $ R = \| i - j \| > 0  .$ 
For any site $k \in \Z^d,$ let 
\begin{equation}
{\bf C}^{(k)} = \bigcup_{n \geq 0 } {\bf C_n^{(k)} } 
\end{equation}
be the total number of sites whose spins affect the spin of site $k$ under equilibrium (recall \eqref{RR5}). Then it is evident that 
\begin{equation}\label{eq:firststep}
  | \mu (fg)  - \mu (f) \mu (g) | \le c_f c_g ( | \Delta_f | + | \Delta_g | )  P ( {\bf C}^{(i)} \cap {\bf C}^{(j)} \neq  \emptyset ) .
\end{equation}
Then the same comparison argument that leads to (\ref{s7}) yields 
$$   P ( {\bf C}^{(i)} \cap {\bf C}^{(j)} \neq  \emptyset ) \le P ( M + M' \geq R ) \le 2 P ( M \geq R/2 ) .$$ 
This concludes the proof of (\ref{eq:mixing}).

Item  1   is a consequence of Theorem  1   of Korshunov (1997), see \cite{Korshunov}. First notice that 
$\bar F(t) = P ( \xi_1 > t) $ is constant on intervals $[ |B_0 (n) | - 1, | B_0 ( n+1) | - 1 [. $  Then evidently $\bar F ( |B_0 (n) | - 1 ) = 1 - \alpha (n) ,$ where $ \alpha _n = \lambda_ 0 + \ldots + \lambda_n .$ 
Therefore, 
\begin{eqnarray*}
\int_{  |B_0 (n) |}^ \infty \bar F (t) dt &=& 
\left ( \sum_{ k = n}^ \infty ( 1 - \alpha (k)) \left[ | B_0 ( k+1)| - |B_0 (k) | \right] \right ) -
 ( 1 - \alpha (n))   \\
&=& \left( \sum_{ k = n+1}^ \infty   \lambda (k) |B_0 (k) | \right)  - ( 1 - \alpha ( n)) [  | B_0 (n) | +1]\\
&=& \left( \sum_{ k = n+1 }^ \infty   \lambda (k) |B_0 (k) | \right)  -  [  | B_0 (n) | +1]  \left( \sum_{ k > n} \lambda (k) \right) . 
\end{eqnarray*}
This integral $\int_{ |B_0 (n) | }^ \infty \bar F (t) dt$ is the quantity that
determines the asymptotic behavior of $ P ( M\geq  |B_0 (n) | ) $ as it is shown in Theorem   1  of Korshunov (1997).
Items 2 and   3 are  a consequence of Theorem 2 and Theorem  3 (ii) of Korshunov (1997). 
\end{proof}

\vskip30pt
\newpage

Antonio Galves

Instituto de Matem\'atica e Estat\'{\i}stica

Universidade de S\~ao Paulo

Caixa Postal 66281

05315-970 S\~ao Paulo, Brasil

e-mail: {\tt galves@usp.br}
\bigskip

Eva L\"ocherbach

LAMA 

Universit\'e Paris 12 Val de Marne

61 avenue du g\'en\'eral de Gaulle

94 000 CRETEIL CEDEX,  France

email: {\tt locherbach@univ-paris12.fr}

\bigskip

  Enza Orlandi
 
Dipartimento di Matematica

Universit\`a  di Roma Tre

 L.go S.Murialdo 1, 00146 Roma,  Italy. 

email: {\tt orlandi@mat.uniroma3.it}
 \end{document}